\def\tto{\;{\lower 1pt \hbox{$\rightarrow$}}\kern -10pt
	\hbox{\raise 2pt \hbox{$\rightarrow$}}\;}
\def\ra{\rangle}
\def\la{\langle}
\def\epsilon{\varepsilon}
\def\R{\Bbb R}
\begin{document}
	
	\title{Solving Indefinite Quadratic Programs by Dynamical Systems: Preliminary Investigations\footnote{Dedicated to Professor Franco Giannessi on the occasion of his 90th birthday.}}
	
	\author{Massimo Pappalardo \and Nguyen Nang Thieu \and Nguyen Dong Yen}
	
	\institute{Massimo Pappalardo \at Dipartimento di Informatica, Universit\`a di Pisa, 56127 Pisa, Italy\\ massimo.pappalardo@unipi.it \and  Nguyen Nang Thieu \at Institute of Mathematics, Vietnam Academy of Science and Technology\\
		18 Hoang Quoc Viet, Hanoi 10307, Vietnam\\
		nnthieu@math.ac.vn\and 
		Nguyen Dong Yen, Corresponding author \at
		Institute of Mathematics, Vietnam Academy of Science and Technology\\
		18 Hoang Quoc Viet, Hanoi 10307, Vietnam\\
		ndyen@math.ac.vn		
	}
	
	\date{Received: date / Accepted: date}
	
	\titlerunning{Solving Indefinite Quadratic Programs by Dynamical Systems}
	
	\authorrunning{M. Pappalardo, N. N. Thieu, N. D. Yen}
	
	\maketitle
	
\begin{abstract} Preliminary results of our investigations on solving indefinite qua\-dra\-tic programs by dynamical systems are given. First, dynamical systems corresponding to two fundamental DC programming algorithms to deal with indefinite quadratic programs are considered. Second, the existence and the uniqueness of the global solution of the dynamical system are proved by using some theorems from nonsmooth analysis and the theory of ordinary differential equations. Third, the strong pseudomonotonicity of the restriction of an affine operator on a closed convex set is analyzed in a special case. Finally, for a parametric indefinite quadratic program related to that special case, convergence of the trajectories of the dynamical system to the Karush-Kuhn-Tucker points is established. The elementary direct proofs in the third and fourth topics would be useful for understanding the meaning and significance of several open problems proposed in this paper. 
\end{abstract}

\keywords{Indefinite qua\-dra\-tic program, DC programming algorithm, affine variational inequality, strong pseudomonotonicity, dynamical system, KKT point set, convergence}

\subclass{90C20 . 90C26 . 47J20 · 49J40 · 49J53 . 49M30 . 34A12}

\section{Introduction}\label{Sect-1}

Our aim in this paper is to show how dynamical systems can be useful for solving indefinite quadratic programs.

\medskip
Consider the \textit{indefinite quadratic programming problem under a geometric constraint}
\begin{eqnarray}\label{QP problem}
	\min\Big\{f(x)=\dfrac{1}{2}x^TQx+q^Tx \mid x\in C\Big\}, \end{eqnarray}
where $Q\in\mathbb R^{n\times n}$ is a symmetric matrix,  $q\in\mathbb  R^n$,  $C\subset\R^n$ is a nonempty closed convex set, and $^T$ signifies the matrix transposition. This problem is well known in optimization theory. Many qualitative properties of~\eqref{QP problem} were established for the case where $C$ is a polyhedral convex set (see, e.g.,~\cite{lty05} and the references therein). Numerical methods to solve the problem in that case were studied in~\cite{CLY_2024,aty2011,PhamDinh_LeThi_Akoa,Tuan_JMAA2015,ye1} (see also the literature cited in those papers). If $C$ is a closed ball with center $a$ and radius $r>0$, i.e., $$C=\bar B(a,r):=\big\{x\in \mathbb R^n\mid \|x-a\|\leq r\big\},$$ then by introducing the new variable $y:=x-a$ one gets the problem on minimizing a linear quadratic function on the ball $\bar B(0,r)$. Thus,~\eqref{QP problem} includes the \textit{trust-region subproblem}~\cite{cgt} as a special case. Note that interesting qualitative properties of the later problem were obtained in~\cite{lty2012,lpr,Qui_Yen_2014}. Numerical methods to solve the problem were considered in~\cite{aty2012,m,PhamDinh_LeThi98,Tuan_Yen_JOGO2013,ye1}.

\medskip
According to the generalized Fermat's rule (see, for example,~\cite[p.~2]{ks80} and~\cite[Theorem~3.1]{lty05}), if $\bar{x}\in C$ is a \textit{local solution} of~\eqref{QP problem}, then 
\begin{equation}\label{fermat_rule}
	\langle Q\bar{x}+q, y-\bar{x}\rangle  \geq 0,\quad\mbox{\rm for all}\;\;y\in C,
\end{equation}

\begin{definition}\label{KKT_point} {\rm A vector $\bar{x}\in C$ is said to be a \textit{Karush-Kuhn-Tucker point} (a KKT point) of~\eqref{QP problem} if condition~\eqref{fermat_rule} is satisfied. The KKT point set of~\eqref{QP problem} is denoted by $C_*$.}
\end{definition}

\begin{definition}\label{AVI} {\rm Let $Q\in\mathbb R^{n\times n}$ be a matrix,  $q\in\mathbb  R^n$, and $C\subset\R^n$ a nonempty closed convex set. The \textit{affine variational inequality with a geometric constraint set} defined by the triple $(Q,q,C)$ is the problem of finding $\bar x\in C$ such that  condition~\eqref{fermat_rule} is fulfilled.}
\end{definition}

In the terminology of Definition~\ref{AVI}, the KKT point set of the indefinite quadratic programming problem under a geometric constraint~\eqref{QP problem} coincides with the solution set of the affine variational inequality  with a geometric constraint set (AVI for brevity) defined by the triple $(Q,q,C)$, where $Q$ is a symmetric matrix. In general, the matrix $Q$ of the AVI given by $(Q,q,C)$ needs not be symmetric.

\medskip
To solve~\eqref{QP problem} by DCA (Difference-of-Convex functions Algorithm)~(see~\cite{PhamDinh_LeThi_AMV97} and \cite{PhamDinh_LeThi98} for more details), one uses the representation $f(x)=f_1(x)-f_2(x)$ for the objective function $f(x)$ with 
\begin{eqnarray*}\label{components_IQP} f_1(x)=\Big[\frac{1}{2}x^TQ_1x+q^Tx\Big]+\delta_C(x),\quad  f_2(x)=\frac{1}{2}x^TQ_2x,
\end{eqnarray*} where $Q_1$ and $Q_2$ are such symmetric positive definite  $n\times n$ matrices that $Q=Q_{1}-Q_{2}$, Here, $\delta_C(x)$ is the \textit{indicator function} of $C$ (i.e., $\delta_C(x)=0$ for $x\in C$ and $\delta_C(x)=\infty$ for $x\notin C$). Denote by $\lambda_1(Q)$ and $\lambda_n(Q)$, respectively, the smallest eigenvalue and the largest eigenvalue of $Q$. Following Pham Dinh et al.~\cite{PhamDinh_LeThi98,PhamDinh_LeThi_Akoa}, one can put

(a) $Q_1=\rho I$, $Q_2=\rho I-Q$, where  $\rho>0$ is a real value such that $\rho >\lambda_n(Q)$;

(b) $Q_1=Q+\rho  I$, $Q_2=\rho I$, where $\rho> 0$ is a real value satisfying the condition $\rho >-\lambda_1(Q)$.

\medskip 
Since an upper bound and a lower bound for the eigenvalues of a symmetric matrix can be easily computed (see~\cite[the estimate~(a) or the estimate~(b) on p.~418]{Stoer_Bulirsch_1980}), one can quickly find a real value $\rho>0$ satisfying $\rho >\lambda_n(Q)$ (resp., $\rho >-\lambda_1(Q)$).

\medskip
The choice made in~(a) leads to the explicit iteration scheme  \begin{eqnarray}\label{Iteration_A}
	x^{k+1}:=P_C\Big(x^k-\frac{1}{\rho}(Qx^k+q)\Big) \quad (k=0,1, 2,\dots)\end{eqnarray} with $x^0\in C$ being an initial point and $P_C(u)$ denoting the metric projection of $u\in\mathbb R^n$ on $C$. Remarkable properties of the DCA sequences in question are given in the next results.
	
\begin{theorem}\label{Tuan's_thm} {\rm (See~\cite[Theorem~2.1]{Tuan_JMAA2015})} If~$C$ is a polyhedral convex set and~\eqref{QP problem} has a global solution, then for each $x^0\in C$, the DCA sequence $\{x^k\}$ constructed by the scheme~\eqref{Iteration_A} converges $R$-linearly to a KKT point of the problem, that is, there exists $\bar x\in C_*$ such that
	$$\limsup_{k\to\infty}\|x^k-\bar x\|^{1/k}<1.$$
\end{theorem}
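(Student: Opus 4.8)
\section*{Proof proposal}

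The plan is to exploit the fact that, with the choice (a), the iteration~\eqref{Iteration_A} is simultaneously a DCA scheme for the decomposition $f=f_1-f_2$ and the projected gradient method $x^{k+1}=P_C\big(x^k-\tfrac1\rho\nabla f(x^k)\big)$ with constant step $1/\rho$, since $\nabla f(x)=Qx+q$. Because $Q_1=\rho I$ and $Q_2=\rho I-Q$ are both positive definite (the latter thanks to $\rho>\lambda_n(Q)$), $f_1$ is $\rho$-strongly convex and $f_2$ is $(\rho-\lambda_n(Q))$-strongly convex. First I would record the DCA sufficient-decrease inequality: there is a constant $\beta>0$ (depending only on $\rho$ and $\lambda_n(Q)$) with
\begin{equation*}
	f(x^k)-f(x^{k+1})\ \ge\ \frac{\beta}{2}\,\|x^{k+1}-x^k\|^2\qquad(k=0,1,2,\dots).
\end{equation*}
This forces $\{f(x^k)\}$ to be nonincreasing and, since it is bounded below by the global optimal value, convergent; summing yields $\sum_k\|x^{k+1}-x^k\|^2<\infty$, hence $\|x^{k+1}-x^k\|\to0$.

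Second I would establish boundedness of $\{x^k\}$. This is where the hypothesis that~\eqref{QP problem} admits a global solution is essential: on a polyhedral set the existence of a minimizer forces $f$ to be bounded below on $C$ (Frank--Wolfe), and I would argue by contradiction that an unbounded subsequence, after passing to a recession direction of $C$ along which $f$ is monotone, would violate the sufficient decrease together with boundedness below. Boundedness guarantees cluster points, and the relation $\|x^{k+1}-x^k\|\to0$ together with the nonexpansiveness of $P_C$ shows that every cluster point $\ox$ satisfies $\ox=P_C\big(\ox-\tfrac1\rho(Q\ox+q)\big)$, i.e. $\ox\in C_*$.

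Third, and this is the heart of the argument, I would prove a local error bound of Luo--Tseng type for the natural residual
$$r(x):=x-P_C\big(x-\tfrac1\rho(Qx+q)\big),$$
namely the existence of $\tau>0$ and $\epsilon>0$ with
\begin{equation*}
	\mathrm{dist}(x,C_*)\ \le\ \tau\,\|r(x)\|\qquad\text{whenever }\ \|r(x)\|\le\epsilon .
\end{equation*}
Since $C$ is polyhedral and $f$ is quadratic, the residual map is piecewise affine and $C_*$ is a finite union of polyhedra, so the bound should follow from Hoffman's lemma applied face by face, combined with the observation that the objective is constant on each connected component of $C_*$. The main obstacle is exactly this error bound: one must control the transition between the finitely many linear pieces of $P_C$ and rule out that the iterates drift indefinitely between distinct connected components of $C_*$.

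Finally I would assemble the pieces in the classical Luo--Tseng manner. Observe that $r(x^k)=x^k-x^{k+1}$, so the sufficient decrease gives $\|r(x^k)\|=\|x^{k+1}-x^k\|\to0$ and the error bound applies for all large $k$; combining $\mathrm{dist}(x^k,C_*)\le\tau\|r(x^k)\|$ with the decrease of $f$ and the constancy of $f$ on components yields, along the tail, a contraction of the form $\mathrm{dist}(x^{k+1},C_*)\le\theta\,\mathrm{dist}(x^k,C_*)$ with $\theta\in(0,1)$, whence $\mathrm{dist}(x^k,C_*)$ and $\|x^{k+1}-x^k\|$ decay geometrically. Geometric decay of the step sizes makes $\{x^k\}$ a Cauchy sequence, so it converges to a single point $\ox\in C_*$, and $\limsup_{k\to\infty}\|x^k-\ox\|^{1/k}<1$, which is the asserted $R$-linear convergence.
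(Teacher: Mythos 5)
The paper itself offers no proof of this theorem: it is quoted verbatim from \cite[Theorem~2.1]{Tuan_JMAA2015}, so the only meaningful comparison is with the proof in that reference, which is indeed built on the Luo--Tseng error-bound framework you describe. Your overall architecture --- DCA sufficient decrease, vanishing residuals $r(x^k)=x^k-x^{k+1}$, a local error bound for the natural residual over a polyhedral set, constancy of $f$ on the finitely many connected components of $C_*$, geometric decay of the steps, Cauchy property, $R$-linear convergence --- is the correct and standard route.

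As a self-contained proof, however, your proposal has two genuine gaps. First, the error bound $\mathrm{dist}(x,C_*)\le\tau\|r(x)\|$ whenever $\|r(x)\|\le\epsilon$ is the entire difficulty, and it does not follow from ``Hoffman's lemma applied face by face'': the residual is piecewise affine and $C_*$ is a finite union of polyhedra, but one must show that a point with small residual is proportionally close to $C_*$, and matching the linear pieces of $r$ against the components of $C_*$ (while ruling out that small residual occurs far from every component) is precisely the content of Luo and Tseng's error-bound theorem for affine variational inequalities; you must either invoke that theorem explicitly or reproduce its proof, which is a substantial piece of work, not a corollary of Hoffman's lemma. Second, the closing step is wrong as stated: sufficient decrease plus the error bound does not yield a contraction $\mathrm{dist}(x^{k+1},C_*)\le\theta\,\mathrm{dist}(x^k,C_*)$, and no such Q-linear decay of distances is available for indefinite $Q$. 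The standard way to finish (and the way the cited proof does) is to add a cost-to-go estimate $f(x^{k+1})-f(\bar x^k)\le c\,\|x^{k+1}-x^k\|^2$ with $\bar x^k$ a nearest point of $C_*$, use the finiteness of the value set $f(C_*)$ to identify the limiting value $f^*$, deduce Q-linear convergence of $f(x^k)$ to $f^*$, and only then obtain geometric decay of $\|x^{k+1}-x^k\|$, whence $\{x^k\}$ is Cauchy and converges $R$-linearly. Your a~priori boundedness argument via recession directions is also inconclusive as sketched --- square-summability of the steps and boundedness of $f(x^k)$ from below do not by themselves preclude an unbounded sequence --- but that step becomes unnecessary once the function-value argument above is run, since summability of $\|x^{k+1}-x^k\|$ gives convergence directly.
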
  
	
	\begin{theorem}\label{Tuan_Yen's_thm} {\rm (See~\cite[Theorem~3.1]{Tuan_Yen_JOGO2013})} If~$C=\bar B(0,r)$ with $r>0,$ then for each $x^0\in C$, the DCA sequence $\{x^k\}$ obtained by the scheme~\eqref{Iteration_A} converges to a KKT point of~\eqref{QP problem}.
	\end{theorem}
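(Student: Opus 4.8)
The plan is to recognize the scheme~\eqref{Iteration_A} as the DC algorithm for the decomposition $f=f_1-f_2$ with $f_1(x)=\tfrac{\rho}{2}\|x\|^2+q^Tx+\delta_C(x)$ and $f_2(x)=\tfrac12 x^T(\rho I-Q)x$, and then to run a descent-plus-Kurdyka--{\L}ojasiewicz argument. First I would record the optimality characterization: since the projection subproblem on $C=\bar B(0,r)$ is solved by $P_C$, one checks that $x^{k+1}$ is the unique minimizer of $f_1(x)-\langle\nabla f_2(x^k),x\rangle$, and that the fixed points of the map $T(x):=P_C\big(x-\tfrac1\rho(Qx+q)\big)$ are exactly the KKT points of~\eqref{QP problem}, i.e.\ the elements of $C_*$ (this is the standard normal-map reformulation of the variational inequality~\eqref{fermat_rule}). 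Because both $f_1$ and $f_2$ are strongly convex (here $\rho>\lambda_n(Q)$ forces $\rho I-Q\succ 0$), the DCA sufficient-decrease estimate yields a constant $c>0$ with
\[
f(x^k)-f(x^{k+1})\ \ge\ c\,\|x^{k+1}-x^k\|^2\qquad(k=0,1,2,\dots).
\]
Since $C=\bar B(0,r)$ is compact, $f$ is bounded below on $C$ and $\{f(x^k)\}$ is nonincreasing, hence convergent to some $f^*$; summing the inequality gives $\sum_k\|x^{k+1}-x^k\|^2<\infty$, so $\|x^{k+1}-x^k\|\to 0$.

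Second, I would extract the asymptotic information. The iterates lie in the compact set $C$, so the cluster set $\Omega$ of $\{x^k\}$ is nonempty, compact, and---because $\|x^{k+1}-x^k\|\to 0$---connected; by continuity of $T$ together with asymptotic regularity, every point of $\Omega$ is a fixed point of $T$, that is, $\Omega\subseteq C_*$, and $f\equiv f^*$ on $\Omega$. At this stage one already has the ``cluster-point'' version of the statement; the real content is to upgrade it to convergence of the \emph{whole} sequence.

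Third---the heart of the matter---I would invoke the Kurdyka--{\L}ojasiewicz (KL) machinery for the lower semicontinuous function $F:=f+\delta_C$. The key computation is a relative-error bound: writing the DCA optimality condition $(\rho I-Q)x^k-\rho x^{k+1}-q\in N_C(x^{k+1})$ and adding $Qx^{k+1}+q$ produces the subgradient
\[
w^{k+1}:=(\rho I-Q)(x^k-x^{k+1})\in\partial F(x^{k+1}),
\]
whence $\mathrm{dist}\big(0,\partial F(x^{k+1})\big)\le(\rho-\lambda_1(Q))\,\|x^{k+1}-x^k\|$. Since $f$ is a polynomial and $\bar B(0,r)$ is a semialgebraic set, $F$ is semialgebraic and therefore satisfies the KL inequality at every point of $\Omega$. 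The sufficient-decrease inequality, the relative-error bound, and the (automatic) continuity of $F$ along the sequence are precisely the three hypotheses of the abstract convergence theorem for descent methods; together they give $\sum_k\|x^{k+1}-x^k\|<\infty$, so $\{x^k\}$ is Cauchy and converges to a single point $\bar x\in\Omega\subseteq C_*$, the desired KKT point.

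The step I expect to be the main obstacle is exactly this passage from cluster points to a unique limit. The difficulty is genuine and specific to the ball: in the degenerate ``hard case'' of the trust-region geometry---when $-q$ is orthogonal to an eigenspace of $Q$ and the associated multiplier is active---the KKT points carrying the value $f^*$ can fill an entire subsphere, so connectedness of $\Omega$ and constancy of $f$ on $\Omega$ are by themselves not enough to force a single limit. The KL inequality is what resolves this, but verifying its applicability and, in particular, the relative-error bound across the boundary of $C$ (where the normal cone $N_C$ is discontinuous) is the delicate part. If instead one wishes to avoid KL theory and give a fully elementary proof tailored to $C=\bar B(0,r)$, then this same degeneracy becomes the obstacle: one must analyze the radial and angular components of the iterates on the sphere $\|x\|=r$ directly and show that the angular part converges, which is the technical core of the argument.
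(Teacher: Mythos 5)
Your proposal cannot be compared against a proof in the paper for a simple reason: the paper does not prove this statement. Theorem~\ref{Tuan_Yen's_thm} is quoted, with citation, from Tuan and Yen~\cite{Tuan_Yen_JOGO2013}, whose argument is an elementary, self-contained analysis tailored to the ball geometry of the trust-region subproblem (tracking the iteration explicitly, with the degenerate ``hard case'' handled by direct case analysis), not a variational-analytic one. Your route is therefore genuinely different, and it is correct. You use exactly the paper's DC decomposition for scheme~\eqref{Iteration_A} (choice~(a): $Q_1=\rho I$, $Q_2=\rho I-Q$, both $f_1$ and $f_2$ strongly convex because $\rho>\lambda_n(Q)$), and your key computations check out: the sufficient-decrease inequality holds with $c=\rho-\tfrac12\lambda_n(Q)>0$ (indeed with modulus $\tfrac12\bigl(\rho+(\rho-\lambda_n(Q))\bigr)$); the optimality condition $(\rho I-Q)x^k-\rho x^{k+1}-q\in N_C(x^{k+1})$ does yield $w^{k+1}=(\rho I-Q)(x^k-x^{k+1})\in\partial F(x^{k+1})$ with $\|w^{k+1}\|\le(\rho-\lambda_1(Q))\|x^{k+1}-x^k\|$, since $\|\rho I-Q\|=\rho-\lambda_1(Q)$; and $F=f+\delta_{\bar B(0,r)}$ is semialgebraic, so the Kurdyka--{\L}ojasiewicz property holds at every cluster point and the Attouch--Bolte--Svaiter descent framework applies (its continuity hypothesis is automatic here because all iterates stay in $C$ where $F=f$ is continuous). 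Your diagnosis of the real difficulty is also accurate: in the hard case the KKT points at the limiting value can fill a subsphere, so connectedness of the cluster set plus constancy of $f$ on it cannot force a single limit; the KL inequality is what rules out the sequence circulating along that continuum. As for what each approach buys: yours is short modulo the cited abstract machinery and generalizes immediately to any semialgebraic closed convex $C$ with bounded iterates, which makes it relevant to the paper's Questions~1--2 far beyond the ball case; the Tuan--Yen proof is longer but entirely elementary, independent of KL theory, and in the spirit of the ``elementary direct proofs'' this paper advocates in Sections~\ref{Sect-3} and~\ref{Sect-4}.
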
  
	
	 For the trust-region subproblem, Pham Dinh and Le Thi~\cite{PhamDinh_LeThi98} proved that using the scheme~\eqref{Iteration_A} with a finite number of restarts one can find a global solution of~\eqref{QP problem}. 
	
	\medskip
	Rewrite the formula in~\eqref{Iteration_A} equivalently as 	\begin{eqnarray}\label{connection} \dfrac{x^{k+1}-x^k}{\eta}=\dfrac{1}{\eta} \left[P_C\Big(x^k-\frac{1}{\rho}(Qx^k+q)\Big)-x^k\right],\end{eqnarray} where $\eta>0$ is a constant. Suppose that the whole iteration sequence $\{x^k\}$ lies on a continuously differentiable curve $\{z(t)\}_{t\geq 0}$ in $\mathbb R^n$ and $z(0)=x^0$, $z(t_k)=x^k$ for all $k$, with $\{t_k\}$ being an increasing sequence of positive real numbers satisfying the condition $\lim\limits_{k\to\infty}t_k=\infty$. Then, by using the rough approximation $\dot z(t_k)\approx \dfrac{z(t_{k+1})-z(t_k)}{\eta}=\dfrac{x^{k+1}-x^k}{\eta}$, from~\eqref{connection} we get   
	\begin{eqnarray*}\label{connection_1} \dot z(t_k)\approx\dfrac{1}{\eta} \left[P_C\Big(z(t_k)-\frac{1}{\rho}(Qz(t_k)+q)\Big)-z(t_k)\right].\end{eqnarray*} Therefore, the curve $\{z(t)\}_{t\geq 0}$ can be approximated by the trajectory $x(\cdot)$ of the dynamical system
\begin{equation}\label{dynamic_sys_A}
	\begin{cases}
		\dot{x}(t)= \dfrac{1}{\eta} \left[P_C\left(x(t)-\dfrac{1}{\rho}\big(Q x(t)+q\big)\right)-x(t)\right], \quad\mbox{\rm for all}\;\; t\geq 0,\\
		x(0)= x^0.
	\end{cases}
\end{equation}
where $\eta$ and $\rho$ are fixed positive constants and \textit{$\rho$ is strictly larger than the largest eigenvalue of $Q$.}

\medskip
If the choice~(b) is adopted, then one has an implicit iteration scheme, where $x^{k+1}$ is computed via $x^k$ by solving the strongly convex quadratic program 
\begin{eqnarray}\label{auxiliary_B}\min\Big\{\psi(x):=\frac{1}{2}x^TQx+q^Tx+\frac{\rho}{2}\|x-u\|^2\mid x\in C\Big\}\end{eqnarray}
with $u=x^k$ (see \cite{PhamDinh_LeThi_Akoa} and~\cite{CLY_2024} for more details). Denote by $F_C(u)$ the unique solution~\eqref{auxiliary_B}. Then, the map $F_C:\mathbb R^n\to C$ is Lipschitz continuous, i.e., there exists a constant $\ell>0$ such that 
\begin{equation}\label{Lipschitz_B}
\|F_C(u^1)-F_C(u^2)\|\leq\ell \|u^1-u^2\|\quad\; \forall u^1, u^2\in \mathbb R^n.
\end{equation}
This fact can be proved by the arguments for obtaining Theorem~2.1 in~\cite{Yen_1995}. Now, the implicit iteration scheme under consideration is given by the formula 
\begin{eqnarray}\label{Iteration_B} x^{k+1}=F_C(x^k)\quad  (k=0,1,2,\dots)\end{eqnarray} with $x^0\in C$ being  an initial point. The following convergence theorem was established by Cuong et al.~\cite[Theorem~3.3]{CLY_2024}.
	
\begin{theorem}\label{Cuong-Lim-Yen's_thm} If~$C$ is a polyhedral convex set and~\eqref{QP problem} has a global solution, then for each $x^0\in C$, the DCA sequence $\{x^k\}$ generated by the scheme~\eqref{Iteration_B} converges $R$-linearly to a point $\bar x\in C_*$.
\end{theorem}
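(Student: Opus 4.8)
\medskip
\noindent\emph{Sketch of a proof.}
The plan is to read~\eqref{Iteration_B} as the DCA iteration for the decomposition $f=f_1-f_2$ coming from choice~(b), and then to combine the intrinsic descent property of DCA with a Lipschitzian error bound for the affine variational inequality defined by $(Q,q,C)$, following the feasible-descent philosophy of Luo and Tseng. The starting point is the optimality characterization of $x^{k+1}=F_C(x^k)$: being the unique minimizer of the strongly convex program~\eqref{auxiliary_B} with $u=x^k$, the iterate satisfies
\begin{equation}\label{prop-opt}
\langle Qx^{k+1}+q+\rho(x^{k+1}-x^k),\,y-x^{k+1}\rangle\ge 0\qquad\mbox{\rm for all }y\in C.
\end{equation}
Here $f_1$ is strongly convex with modulus $\mu_1=\lambda_1(Q)+\rho>0$ and $f_2$ is strongly convex with modulus $\mu_2=\rho$. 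A routine difference-of-convex computation, using the minimality of $x^{k+1}$ for $f_1(\cdot)-\langle\rho x^k,\cdot\rangle$ and the strong convexity of $f_2$ at $x^k$, then yields the sufficient-decrease inequality
\begin{equation}\label{prop-descent}
f(x^k)-f(x^{k+1})\ge\tfrac12(\mu_1+\mu_2)\,\|x^{k+1}-x^k\|^2 .
\end{equation}
Since~\eqref{QP problem} has a global solution, $f$ is bounded below on $C$; hence the numbers $f(x^k)$ decrease to a finite limit and, by~\eqref{prop-descent}, $\sum_k\|x^{k+1}-x^k\|^2<\infty$, so in particular $\|x^{k+1}-x^k\|\to 0$.

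Next I would tie the step length to the natural residual of the AVI. Put $R(x):=x-P_C\big(x-(Qx+q)\big)$, so that $\bar x\in C_*$ precisely when $R(\bar x)=0$. Rewriting~\eqref{prop-opt} as $x^{k+1}=P_C\big(x^{k+1}-(Qx^{k+1}+q)-\rho(x^{k+1}-x^k)\big)$ and using the nonexpansiveness of $P_C$, one gets the clean bound
\begin{equation}\label{prop-res}
\|R(x^{k+1})\|\le\rho\,\|x^{k+1}-x^k\| .
\end{equation}
Because $C$ is polyhedral and the operator $x\mapsto Qx+q$ is affine, a Lipschitzian local error bound of Robinson/Luo--Tseng type is available: on each bounded set there is $\tau>0$ with $\mathrm{dist}(x,C_*)\le\tau\,\|R(x)\|$. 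Combining this with~\eqref{prop-res} gives $\mathrm{dist}(x^{k+1},C_*)\le\tau\rho\,\|x^{k+1}-x^k\|$. Finally, one invokes the isocost-separation property of quadratic programs over polyhedral sets: $C_*$ is a finite union of polyhedra, $f$ is constant on each connected component of $C_*$ (for $x,x'$ in the relative interior of a common face $F$ one has $Qx+q,\,Qx'+q\perp\mathrm{par}\,F$, whence $f(x')-f(x)=\tfrac12\langle(Qx'+q)+(Qx+q),x'-x\rangle=0$), and the finitely many component values are isolated. Together with the monotone convergence of $f(x^k)$ this forces the tail of $\{x^k\}$ to approach a single value-level of $C_*$, after which the Luo--Tseng feasible-descent theorem upgrades~\eqref{prop-descent}, the error bound, and $\|x^{k+1}-x^k\|\to0$ into $Q$-linear convergence of $\{f(x^k)\}$ and $R$-linear convergence of $\{x^k\}$ to some $\bar x\in C_*$. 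Equivalently, one may observe that~\eqref{prop-opt} yields $\mathrm{dist}\big(0,\partial(f+\delta_C)(x^{k+1})\big)\le\rho\,\|x^{k+1}-x^k\|$ and that $f+\delta_C$, being piecewise quadratic, satisfies the Kurdyka--{\L}ojasiewicz inequality with exponent $1/2$, then apply the abstract descent-method convergence theorem.

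The step I expect to be the main obstacle is the \emph{boundedness} of $\{x^k\}$, which is needed both to make the error-bound constant $\tau$ uniform along the sequence and to secure a cluster point. For an indefinite $Q$ the sublevel set $\{x\in C:\ f(x)\le f(x^0)\}$ may well be unbounded, and~\eqref{prop-descent} controls only the increments $\|x^{k+1}-x^k\|$, not the norms $\|x^k\|$; note also that $F_C$ is in general \emph{not} a contraction when $Q$ is indefinite, so the Banach fixed-point shortcut is unavailable. I would establish boundedness by a separate recession-cone argument: using~\eqref{prop-opt} together with the boundedness of $\{f(x^k)\}$ to exclude drift of the iterates along those recession directions of $C$ on which $f$ is not coercive, thereby confining the whole trajectory to a compact set before the error bound is applied.
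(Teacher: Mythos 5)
The paper itself contains no proof of this theorem: it is imported verbatim from Cuong, Lim and Yen~\cite[Theorem~3.3]{CLY_2024}, so the only meaningful comparison is with the argument in that reference, which is precisely the Luo--Tseng feasible-descent/error-bound analysis you outline. Your computational steps are sound: the variational characterization of $x^{k+1}=F_C(x^k)$ as minimizer of~\eqref{auxiliary_B}; the DCA sufficient-decrease inequality with constant $\tfrac12(\lambda_1(Q)+2\rho)>0$ (positive because $\rho>-\lambda_1(Q)$); the bound $\|R(x^{k+1})\|\le\rho\,\|x^{k+1}-x^k\|$ from nonexpansiveness of $P_C$; and the combination of the error bound with the fact that $f$ takes finitely many values on $C_*$ (proper separation of isocost surfaces) to invoke the Luo--Tseng convergence theorem. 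This is the same route as the cited proof.

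The one genuine defect is in your last paragraph. Because you stated the error bound only ``on each bounded set,'' you are forced to manufacture boundedness of $\{x^k\}$ beforehand, and the recession-cone argument you defer to is not carried out --- nor is it clear it could be, since, as you yourself note, sufficient decrease controls increments and not norms. The repair is to use the error bound in its correct form, which needs no boundedness at all: for polyhedral $C$ the projection $P_C$ is piecewise affine, hence the natural residual $R(x)=x-P_C\big(x-(Qx+q)\big)$ is piecewise affine, hence $R^{-1}$ is a polyhedral multifunction in Robinson's sense; consequently there exist $\tau>0$ and $\epsilon>0$ such that $\mathrm{dist}(x,C_*)\le\tau\|R(x)\|$ for \emph{every} $x$ with $\|R(x)\|\le\epsilon$ (equivalently, the Luo--Tseng error bound for quadratic programs over polyhedra, valid on sublevel sets intersected with the region of small residual). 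Your residual bound together with $\|x^{k+1}-x^k\|\to0$ guarantees $\|R(x^k)\|\le\epsilon$ for all large $k$, so the error bound applies along the tail with a uniform constant; the Luo--Tseng cost-to-go estimate then yields geometric decay of $f(x^k)-f^*$ and of $\sum_{j\ge k}\|x^{j+1}-x^j\|$, so $\{x^k\}$ is Cauchy. Boundedness and convergence thus come out as conclusions rather than being needed as hypotheses, and the separate recession-cone step should simply be deleted. With that replacement, your sketch is essentially the proof of the cited result.
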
  

Numerical tests on two families of randomly generated indefinite quadratic programs on polyhedral convex sets in~\cite[pp.~1105--1107]{CLY_2024} showed that in terms of the number of computation steps and the execution time, algorithm~\eqref{Iteration_B}  is much more efficient than algorithm~\eqref{Iteration_A} when the algorithms
are applied to the same problem.

\medskip
If the whole sequence $\{x^k\}$ generated  by the iteration scheme~\eqref{Iteration_B} lies on a continuously differentiable curve $\{z(t)\}_{t\geq 0}$ in $\mathbb R^n$ and $z(0)=x^0$, $z(t_k)=x^k$ for all $k$, where $\{t_k\}$ is an increasing sequence of positive real numbers with $\lim\limits_{k\to\infty}t_k=\infty$, then the reasoning given in a preceding paragraph shows that the curve $\{z(t)\}_{t\geq 0}$ can be approximated by the trajectory $x(\cdot)$ of the dynamical system
\begin{equation}\label{dynamic_sys_B}
	\begin{cases}
		\dot{x}(t)= \dfrac{1}{\eta} \left[F_C(x(t))-x(t)\right], \quad\mbox{\rm for all}\;\; t\geq 0,\\
		x(0)= x^0,
	\end{cases}
\end{equation}
where $\eta$ and $\rho$ are fixed positive constants, \textit{$\rho$ is strictly larger than $-\lambda_1(Q)$}, and with $F_C(u)$ denoting the unique solution~\eqref{auxiliary_B}.

\medskip
The next questions, which are the key ingredients of solving~\eqref{QP problem} by dynamical systems, arise in a natural way. 

\medskip
\textbf{Question 1:} \textit{Let $\{t_k \}\subset (0,\infty)$ be a sequence such that $\lim\limits_{k\to\infty} t_k =\infty$ and let $\lim\limits_{k\to \infty}x(t_k )=\bar{x}$, where $x(t)$ is either the trajectory of~\eqref{dynamic_sys_A} or the trajectory of~\eqref{dynamic_sys_B}. Can we prove that $\bar{x}$ is a KKT point of~\eqref{QP problem}?}

\medskip
\textbf{Question 2:} \textit{If~\eqref{QP problem} has a solution, then there must exist a KKT point $\bar{x}$ of the problem satisfying  $\lim\limits_{t\to \infty}x(t)=\bar{x}$, where $x(t)$ is either the trajectory of~\eqref{dynamic_sys_A} or the trajectory of~\eqref{dynamic_sys_B}?}

\medskip
Regarding the positive constant $\rho$ which has a great role in guaranteeing  the convergence of the DCA sequences addressed in Theorem~\ref{Tuan's_thm} (resp., in Theorem~\ref{Cuong-Lim-Yen's_thm}), the following question is of interest.

\medskip
\textbf{Question 3:} \textit{Is the condition $\rho >\lambda_n(Q)$ (resp., $\rho >-\lambda_1(Q)$) essential for the convergence of the trajectory $x(\cdot)$ of~\eqref{dynamic_sys_A} (resp., of~\eqref{dynamic_sys_B}) to a KKT point of~\eqref{QP problem}, provided that the problem has a solution?}

\medskip
It is interesting also to investigate the role of the constant $\eta>0$ for the convergence of the trajectories of the dynamical systems~\eqref{dynamic_sys_A} and of~\eqref{dynamic_sys_B}.

\medskip
\textbf{Question 4:} \textit{Does the constant $\eta>0$ play any role in the convergence of the trajectory $x(\cdot)$ of~\eqref{dynamic_sys_A} (resp., of~\eqref{dynamic_sys_B}) to a KKT point of~\eqref{QP problem}, provided that the problem has a solution?}

\medskip
Note that using dynamical systems to solve optimization problems, variational inequalities, as well as equilibrium problems, is a powerful approach that has been studied intensively worldwide. 

\medskip
Antipin~\cite{Antipin_94} showed how one can minimize a $C^1$ function on a closed convex set by means of a dynamical system constructed by the gradient of the objective function and the metric projection onto the constraint set. Various convergence results for both first-order dynamical systems and second-order dynamical systems were obtained in~\cite[Theorems~1--5]{Antipin_94}. 

\medskip
Assuming that the variational inequality in question is \textit{strongly monotone} and a global Lipschitz property is fulfilled, Cavazzuti et al.~\cite[Theorem~5.2]{CPP_02} proved that the trajectory of a dynamical system of the type~\eqref{dynamic_sys_A} converges exponentially to the unique solution from any initial point, provided a positive coefficient $\alpha$ (see~\cite[formula~(9)]{CPP_02}) is sufficiently small. Since that $\alpha$ corresponds to the number $1/\rho$ in~\eqref{dynamic_sys_A}, the later requirement means that $\rho$ must be large enough. 

\medskip
For different methods and results of applying dynamical systems to solve optimization problems, variational inequalities, and equilibrium problems, we refer to~\cite{AGR_00,APR_14,BCPP_19,DN_93,HSV_18,Hai_2022,NZ_96,PP_02,VTV_22,Vuong_2021,VS_20}.

\medskip
Since the \textit{strong pseudomonotonicity}, a relaxed version of the strong monotonicity, of the operator defining the variational inequality in question is a crucial assumption in a number of the above-cited research works (for instance, in~\cite[Theorem~3.2]{Hai_2022}) and also in~\cite{KV_14}\cite{KVK_14}, it would be interesting to see what does the strong pseudomonotonicity means for the restriction of an affine operator on a closed convex set. Needless to say that by considering the above Questions~1--3 we hope to avoid the strong pseudomonotonicity of the restriction of the affine operator $F(x):=Qx+q$ on $C$. The hope is reasonable because the matrix $Q$ is symmetric and the number $\rho$ in~\eqref{dynamic_sys_A} and~\eqref{dynamic_sys_B} is chosen after computing the largest eigenvalue or the smallest eigenvalue of $Q$.

\medskip
The results of our preliminary investigations of the above Questions~1--4 are organized as follows. Based on the fundamental theorem of Clarke~\cite[Theorem~4.4]{Clarke_1975} on the flow-invariant sets for Lipschitzian differential inclusions, in Section~\ref{Sect-2} we establish some basic properties of the trajectories of the dynamical systems~\eqref{dynamic_sys_A} and ~\eqref{dynamic_sys_B}. Section~\ref{Sect-3} gives a complete answer to the question ``Which affine operators are strongly pseudomonotone?'' in a particular case. The question ``When the trajectories of the dynamical system converge to the Karush-Kuhn-Tucker points?'' is studied carefully in one special setting in Section~\ref{Sect-4}. 

\medskip
Although the results herein clarify many things related to Questions~1--4, a lot of work remains to be done to solve completely those questions.

\section{Trajectories of the Dynamical Systems}\label{Sect-2}

The following result on the existence and uniqueness of a global solution of the dynamical system~\eqref{dynamic_sys_A}, which is an initial value problem for differential equations (see,~e.g.,~the IVP given by formula~(2.12) in~\cite[p.~26]{Teschl_2012}), is valid.

\begin{theorem}\label{global_sol_A} For any $x^0\in\mathbb R^n$, $\rho>0$, and $\eta>0$, there exists a unique $C^1$ function $x:\mathbb R\to\mathbb R^n$ satisfying the differential equation and the initial condition in~\eqref{dynamic_sys_A}.
\end{theorem}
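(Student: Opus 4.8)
The plan is to recast~\eqref{dynamic_sys_A} as the autonomous initial value problem $\dot x(t)=g(x(t))$, $x(0)=x^0$, where
\[
g(x):=\frac{1}{\eta}\Big[P_C\Big(x-\frac{1}{\rho}(Qx+q)\Big)-x\Big],
\]
and then to verify that the vector field $g:\mathbb{R}^n\to\mathbb{R}^n$ is globally Lipschitz continuous. Once this is established, the assertion follows from the classical global existence and uniqueness theorem for ordinary differential equations with a globally Lipschitz right-hand side, i.e.\ from the Picard--Lindel\"of theorem combined with the standard continuation argument applied to the IVP of the form cited in~\cite[p.~26]{Teschl_2012}.

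First I would prove the global Lipschitz property of $g$. Writing $A(x):=x-\frac{1}{\rho}(Qx+q)=\big(I-\frac{1}{\rho}Q\big)x-\frac{1}{\rho}q$, the map $A$ is affine, hence Lipschitz with constant $\big\|I-\frac{1}{\rho}Q\big\|$. The decisive ingredient is that the metric projection $P_C$ onto a nonempty closed convex set is nonexpansive, that is, $\|P_C(u)-P_C(v)\|\le\|u-v\|$ for all $u,v\in\mathbb{R}^n$; this is a classical consequence of the variational characterization of $P_C$. Combining these two facts, for all $x,y\in\mathbb{R}^n$ I obtain
\[
\|g(x)-g(y)\|\le\frac{1}{\eta}\Big(\|P_C(A(x))-P_C(A(y))\|+\|x-y\|\Big)\le\frac{1}{\eta}\Big(\big\|I-\tfrac{1}{\rho}Q\big\|+1\Big)\|x-y\|,
\]
so $g$ is globally Lipschitz on $\mathbb{R}^n$ with constant $L:=\eta^{-1}\big(\|I-\rho^{-1}Q\|+1\big)$.

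Second, I would invoke the global existence and uniqueness result. By the Picard--Lindel\"of theorem the initial value problem admits a unique local solution on some interval around $t=0$, and the Lipschitz bound gives the linear growth estimate $\|g(x)\|\le\|g(0)\|+L\|x\|$. A Gr\"onwall-type argument then bounds any maximal solution on every bounded subinterval of its interval of existence, which rules out finite-time blow-up in both time directions; hence the maximal solution is defined on all of $\mathbb{R}$, and uniqueness is ensured by the Lipschitz property. Finally, since $g$ is continuous and $\dot x=g(x)$, the solution $x$ is automatically of class $C^1$. I do not expect a genuine obstacle here: the only point requiring a word of care is that the conclusion demands a solution on the whole real line rather than merely on $[0,\infty)$, but this is immediate because the field is autonomous and globally Lipschitz, so the growth estimate extends the trajectory to negative times in exactly the same way.
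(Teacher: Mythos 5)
Your proposal is correct and follows essentially the same route as the paper's proof: nonexpansiveness of $P_C$ yields global Lipschitz continuity of the vector field, Picard--Lindel\"of gives local existence and uniqueness, and a linear growth bound rules out finite-time blow-up so that the solution extends to all of $\mathbb{R}$. The only (harmless) differences are cosmetic: you obtain a slightly sharper Lipschitz constant by treating $x\mapsto\bigl(I-\tfrac{1}{\rho}Q\bigr)x-\tfrac{1}{\rho}q$ as a single affine map, and you derive the growth estimate $\|g(x)\|\le\|g(0)\|+L\|x\|$ directly from the Lipschitz property rather than via the paper's separate computation with a point $\bar x\in C$, invoking Gr\"onwall where the paper cites the corresponding global-existence theorem in Teschl.
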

\begin{proof} Let $F(x)=\dfrac{1}{\eta} \left[P_C\left(x-\dfrac{1}{\rho}\big(Q x+q\big)\right)-x\right]$ for every $x\in \mathbb R^n$. Since the operator $P_C:\mathbb R^n\to C$ is nonexpansive~\cite[Chapter~I, Corollary 2.4]{ks80}, one has for any $x,y\in\mathbb R^n$ the estimates
\begin{equation*}\label{Lipschitz_property}\begin{array}{rcl}
	& & \|F(y)-F(x)\|\\
	& =&\big\|\dfrac{1}{\eta} \left[P_C\left(y-\dfrac{1}{\rho}\big(Q y+q\big)\right)-y\right]-\dfrac{1}{\eta} \left[P_C\left(x-\dfrac{1}{\rho}\big(Q x+q\big)\right)-x\right]\big\|\\
	&\leq& \dfrac{1}{\eta}\left[\big\|P_C\left(y-\dfrac{1}{\rho}\big(Q y+q\big)\right)-P_C\left(x-\dfrac{1}{\rho}\big(Q x+q\big)\right)\big\|+\|y-x\|\right]\\
	&\leq& \dfrac{1}{\eta}\left[\big\|\left(y-\dfrac{1}{\rho}\big(Q y+q\big)\right)-\left(x-\dfrac{1}{\rho}\big(Q x+q\big)\right)\big\|+\|y-x\|\right]\\
	&\leq& \dfrac{1}{\eta}\left(\dfrac{1}{\rho}\|Q\|+2\right)\|y-x\|.
\end{array}
\end{equation*} Thus, the mapping $F:\mathbb R^n\to \mathbb R^n$ is globally Lipschitz with the constant $\dfrac{1}{\eta}\left(\dfrac{1}{\rho}\|Q\|+2\right)$. Rewrite~\eqref{dynamic_sys_A} as
\begin{equation}\label{IVP}
	\dot x=F(x),\quad x(0) =x^0.
\end{equation} Applying the Picard-Lindel\"of Theorem (see, e.g., ~\cite[Chapter~2, Theorem~2.3]{Teschl_2012}), we can assert that the initial value problem~\eqref{IVP} has a locally unique solution. 

Now, to show that all solutions of the IVP in \eqref{IVP} are defined for all $t\in\mathbb R^n$ by using the result in~\cite[Chapter~2, Theorem~2.12]{Teschl_2012}, it suffices to prove that there are constants $M$ and $L$ such that
\begin{equation}\label{growth}
	\|F(x)\|\leq M+L\|x\|\quad\; \forall x\in\mathbb R^n.
\end{equation} To do so, select any point $\bar x\in C$ and note that
\begin{equation*}\label{growth_1}\begin{array}{rcl}
		\|F(x)\| & = & \big\|\dfrac{1}{\eta} \Big[P_C\left(x-\dfrac{1}{\rho}\big(Q x+q\big)\right)-x\Big]\big\|\\
		& = &  \dfrac{1}{\eta} \big\|\Big[P_C\left(x-\dfrac{1}{\rho}\big(Q x+q\big)\right)-x\Big]\big\|\\
			& = &  \dfrac{1}{\eta} \big\|\Big[\left\{P_C\left(x-\dfrac{1}{\rho}\big(Q x+q\big)\right)-\left(x-\dfrac{1}{\rho}\big(Q x+q\big)\right)\right\}-\dfrac{1}{\rho}\big(Q x+q\big)\Big]\big\|\\
			& \leq & \dfrac{1}{\eta}\Big[\big\|\bar x-\left(x-\dfrac{1}{\rho}\big(Q x+q)\right)\|+\dfrac{1}{\rho}\|Q x+q\|\Big]\\
				& \leq & \dfrac{1}{\eta}\Big[\left(\|\bar x\|+\dfrac{2}{\rho}\|q\|\right)+\left(\dfrac{2}{\rho}\|Q\|+1\right)\|x\|\Big].
	\end{array}
\end{equation*} Therefore, the growth condition~\eqref{growth} is fulfilled with
$$M:=\dfrac{1}{\eta}\left(\|\bar x-q\|+\dfrac{1}{\rho}\|q\|\right),\quad\; L:=\dfrac{1}{\eta}\left(\dfrac{2}{\rho}\|Q\|+1\right).$$

We have proved that there is a unique $C^1$ function $x(\cdot)$ defined on the whole real line satisfying the conditions in~\eqref{dynamic_sys_A}.
$\hfill\Box$
\end{proof}

\begin{theorem}\label{global_sol_B} For any $x^0\in\mathbb R^n$, $\rho>0$, and $\eta>0$, the dynamical system~\eqref{dynamic_sys_B} has a unique $C^1$ trajectory $x(\cdot)$, which is defined on the whole real line.
\end{theorem}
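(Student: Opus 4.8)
The plan is to follow verbatim the three-step scheme of the proof of Theorem~\ref{global_sol_A}, replacing the projection-based vector field by the resolvent-type field $G(x):=\dfrac{1}{\eta}\left[F_C(x)-x\right]$ and exploiting the global Lipschitz property~\eqref{Lipschitz_B} of $F_C$ in place of the nonexpansiveness of $P_C$. Thus I would first establish that $G:\mathbb R^n\to\mathbb R^n$ is globally Lipschitz, then invoke the Picard-Lindel\"of theorem for local existence and uniqueness, and finally verify a linear growth condition to continue the solution to all of $\mathbb R$.

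For the first step, for any $x,y\in\mathbb R^n$ the triangle inequality together with~\eqref{Lipschitz_B} yields
\begin{equation*}
\|G(y)-G(x)\|\leq\dfrac{1}{\eta}\Big[\|F_C(y)-F_C(x)\|+\|y-x\|\Big]\leq\dfrac{\ell+1}{\eta}\,\|y-x\|,
\end{equation*}
so $G$ is globally Lipschitz with constant $(\ell+1)/\eta$. Rewriting~\eqref{dynamic_sys_B} as the initial value problem $\dot x=G(x)$, $x(0)=x^0$, the Picard-Lindel\"of theorem~\cite[Chapter~2, Theorem~2.3]{Teschl_2012} guarantees a locally unique solution.

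The second step is the linear growth estimate~\eqref{growth} for $G$. Here, unlike the case of Theorem~\ref{global_sol_A}, there is no explicit formula for $F_C$, so the bound must be derived abstractly from Lipschitz continuity: fixing the reference point $0$ and setting $c:=\|F_C(0)\|$, I would estimate
\begin{equation*}
\|F_C(x)\|\leq\|F_C(x)-F_C(0)\|+\|F_C(0)\|\leq\ell\|x\|+c,
\end{equation*}
whence
\begin{equation*}
\|G(x)\|\leq\dfrac{1}{\eta}\Big[\|F_C(x)\|+\|x\|\Big]\leq\dfrac{c}{\eta}+\dfrac{\ell+1}{\eta}\|x\|,
\end{equation*}
so that~\eqref{growth} holds with $M:=c/\eta$ and $L:=(\ell+1)/\eta$. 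The continuation theorem~\cite[Chapter~2, Theorem~2.12]{Teschl_2012} then extends the local solution to a unique $C^1$ trajectory defined on the whole real line.

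As for the main obstacle: because the decisive analytic ingredient---the global Lipschitz continuity of $F_C$ asserted in~\eqref{Lipschitz_B}---is taken as given, there is essentially no hard step remaining, and the argument reduces to the routine bookkeeping above. The only point deserving attention is that $G$ is built from the \emph{implicitly} defined solution map $F_C$ of the strongly convex program~\eqref{auxiliary_B}; consequently the growth bound cannot be obtained by an explicit projection computation as in Theorem~\ref{global_sol_A}, but must instead be read off from the Lipschitz estimate, as done above. If one wished to make the proof self-contained, the genuinely substantive work would be the verification of~\eqref{Lipschitz_B} itself (via the stability argument of~\cite{Yen_1995} alluded to in the text), but since that estimate is already available, it may simply be cited.
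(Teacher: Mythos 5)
Your proposal is correct and follows essentially the same route as the paper's proof: define $G(x)=\frac{1}{\eta}\left[F_C(x)-x\right]$, deduce global Lipschitz continuity of $G$ from~\eqref{Lipschitz_B}, apply the Picard-Lindel\"of theorem for local existence and uniqueness, and verify the linear growth bound to invoke the continuation theorem~\cite[Chapter~2, Theorem~2.12]{Teschl_2012}. The only cosmetic difference is the reference point in the growth estimate (you use $0$ and $\|F_C(0)\|$, the paper anchors at a point $\bar x\in C$ and $\|F_C(\bar x)\|$), which is immaterial since $F_C$ is defined on all of $\mathbb R^n$.
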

\begin{proof} Let $\ell$ be a positive constant satisfying the condition~\eqref{Lipschitz_B}. For every vector $x\in \mathbb R^n$, put $G(x)=\dfrac{1}{\eta} \left[F_C(x)-x\right]$ . By~\eqref{Lipschitz_B}, we have for any $x,y\in\mathbb R^n$ the following
	\begin{equation*}\label{Lipschitz_property_B}\begin{array}{rcl}
			\|G(y)-G(x)\| & =&\big\|\dfrac{1}{\eta} \big[(F_C(y)-y)-(F_C(x)-x)\big]\big\|\\
			&\leq& \dfrac{1}{\eta}\left[\big\|F_C(y)-F_C(x)\big\|+\|y-x\|\right]\\[1.5ex]
			&\leq& \dfrac{1}{\eta}(\ell+1)\|y-x\|.
		\end{array}
	\end{equation*} So, the mapping $G:\mathbb R^n\to \mathbb R^n$ is globally Lipschitz with the constant $\dfrac{1}{\eta}(\ell+1)$. Rewrite the system~\eqref{dynamic_sys_B} as
	\begin{equation}\label{IVP_B}
	\dot x=G(x),\quad x(0) =x^0.
	\end{equation} This IVP has a locally unique solution by the Picard-Lindel\"of Theorem. 
	
There are constants $M_1$ and $L_1$ such that
	\begin{equation}\label{growth_B}
	\|G(x)\|\leq M_1+L_1\|x\|\quad\; \forall x\in\mathbb R^n.
	\end{equation} Indeed, fixing a point $\bar x\in C$, we have for every $x\in \mathbb R^n$ the estimates
	\begin{equation*}\begin{array}{rcl}
		\|G(x)\| =  \big\|\dfrac{1}{\eta} \left[F_C(\bar x)-x\right]\big\| & = & \dfrac{1}{\eta}\big\|\left(F_C(x)-F_C(\bar x)\right)-\left(F_C(\bar x)-x\right)\big\|\\[1.5ex]
		& \leq &  \dfrac{1}{\eta}\Big[\ell\|x-\bar x\|+\|F_C(\bar x)-x\|\Big]\\[1.5ex]
		& \leq &  \dfrac{1}{\eta}\Big[\ell\|\bar x\|+\|F_C(\bar x)\|+(1+\ell)\|x\|\Big]  
	\end{array}
	\end{equation*} Therefore, the growth condition~\eqref{growth_B} is satisfied if we choose
	$$M_1=\dfrac{1}{\eta}\left(\ell\|\bar x\|+\|F_C(\bar x)\|\right),\quad\; L_1=\dfrac{ 1+\ell}{\eta}.$$ Thanks to~\eqref{growth_B}, by~\cite[Chapter~2, Theorem~2.12]{Teschl_2012} we can assert all solutions of the IVP in \eqref{IVP_B} are defined for all $t\in\mathbb R^n$.
		
	We have seen that the system~\eqref{dynamic_sys_B} possesses a unique $C^1$ trajectory $x(\cdot)$, which is defined on the whole real line.	$\hfill\Box$
	\end{proof}
	
	\begin{remark} {\rm In the proofs of Theorems~\ref{global_sol_A} and~\ref{global_sol_B}, there is no need to assume that $Q$ is a symmetric matrix. Hence, the obtained results are valid for general AVIs.}
	\end{remark}
	
\begin{theorem}\label{flow_invariant_A} For any $x^0\in C$, $\rho>0$, and $\eta>0$, the whole trajectory $x(\cdot)$ of~\eqref{dynamic_sys_A} is contained in $C$, that is $x(t)\in C$ for all $t\geq 0$. 
\end{theorem}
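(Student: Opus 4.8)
The plan is to invoke Clarke's flow-invariance theorem \cite[Theorem~4.4]{Clarke_1975} rather than to track the trajectory by hand. By Theorem~\ref{global_sol_A} the system \eqref{dynamic_sys_A} has a unique global $C^1$ solution $x(\cdot)$ with $x(0)=x^0\in C$, and its right-hand side $F(x)=\frac{1}{\eta}\big[P_C\big(x-\frac{1}{\rho}(Qx+q)\big)-x\big]$ is globally Lipschitz on $\mathbb{R}^n$, so the hypotheses of the flow-invariance result are in force. For such a Lipschitzian field, Clarke's theorem characterizes flow-invariance of a closed set $C$ by an inner-product condition against normals: it suffices to verify that $\langle F(x),\zeta\rangle\le 0$ for every $x\in C$ and every proximal normal $\zeta$ to $C$ at $x$. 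Since $C$ is convex, the proximal normal cone coincides with the classical normal cone $N_C(x)=\{\zeta\in\mathbb{R}^n:\langle\zeta,y-x\rangle\le 0\ \ \forall y\in C\}$, so the entire problem collapses to checking a single elementary inequality at each point of $C$.

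The key step is the verification of that inequality, and it rests on one clean observation: at a point of $C$ the vector field always points from $x$ toward another point of $C$. Fix $x\in C$ and set $w:=P_C\big(x-\frac{1}{\rho}(Qx+q)\big)$; by definition of the metric projection, $w\in C$. Then for any $\zeta\in N_C(x)$ the defining property of the normal cone gives $\langle\zeta,w-x\rangle\le 0$, whence $\langle F(x),\zeta\rangle=\frac{1}{\eta}\langle w-x,\zeta\rangle\le 0$. Thus the invariance criterion holds at every $x\in C$. Equivalently, in tangent-cone language, $w-x$ is a feasible direction for the convex set $C$ at $x$ (because $(1-\lambda)x+\lambda w\in C$ for all $\lambda\in[0,1]$), so $F(x)$ lies in the tangent cone $T_C(x)$; this is precisely the subtangentiality (Nagumo-type) condition underlying Clarke's result.

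With this condition established, Clarke's theorem yields that $C$ is flow-invariant for \eqref{dynamic_sys_A}, i.e.\ the trajectory issuing from $x^0\in C$ satisfies $x(t)\in C$ for all $t\ge 0$, which is the assertion. I expect the only real difficulty to be expository rather than mathematical: one must cite Clarke's criterion in the exact form applicable here (proximal normals, closed set, single-valued Lipschitz field) and record that convexity reduces the proximal normal cone to the ordinary normal cone, so that the projection inequality applies verbatim. A fully self-contained alternative, worth noting, would be a Lyapunov argument using $\varphi(t)=\frac{1}{2}\|x(t)-P_C(x(t))\|^2$ together with the nonexpansiveness of $P_C$ to force $\varphi\equiv 0$; but that route is messier, since two distinct projections, namely $P_C(x(t))$ and $P_C$ of the shifted point, enter the computation, so the normal-cone approach via Clarke's theorem is preferable.
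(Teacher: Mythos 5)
Your proposal is correct and follows essentially the same route as the paper: both arguments rest on the single observation that $w:=P_C\big(x-\tfrac{1}{\rho}(Qx+q)\big)\in C$, so that $F(x)$ is a nonnegative multiple of a vector pointing from $x$ into $C$, and then invoke Clarke's flow-invariance theorem \cite[Theorem~4.4]{Clarke_1975} for the globally Lipschitz field supplied by Theorem~\ref{global_sol_A}. The only cosmetic difference is that you phrase the criterion as the normal-cone inequality $\langle F(x),\zeta\rangle\le 0$ for $\zeta\in N_C(x)$, whereas the paper checks membership in the Clarke tangent cone $T_C(x)=\overline{\mbox{\rm cone}(C-x)}$ directly; for convex $C$ these conditions are polar-equivalent, and you record the tangent-cone version yourself.
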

\begin{proof} Let $F(\cdot)$ be defined as in the proof of Theorem~\ref{global_sol_A}. We  have $F(\bar{x})\in T_C(\bar{x})$ for all $\bar{x}\in C$, where $T_C(\bar{x})$ denotes the Clarke tangent cone to $C$ at $\bar{x}$ (see, e.g.,~\cite[Definition~3.6]{Clarke_1975}). Indeed, for any $x\in C$,  it holds that $P_C\left(x-\dfrac{1}{\rho}\big(Q x+q\big)\right)\in C$. Hence,
\begin{equation}\label{closure_cone_1}
P_C\left(x-\dfrac{1}{\rho}\big(Q x+q\big)\right)-\bar{x}\in C-\bar{x}\subset \mbox{\rm cone} (C-\bar{x})\subset\overline{\mbox{\rm cone} (C-\bar{x})},
\end{equation}
	where $\mbox{\rm cone} (\Omega):=\{\lambda \omega\mid\lambda \geq 0,\; \omega\in \Omega\}$ is the cone generated by a subset $\Omega\subset\R^n$  and $\overline{D}$ stands for the topological closure of a subset $D\subset\R^n$. Since $C$ is convex, by  the second assertion of Proposition~3.3 and Definitions~3.6 in~\cite{Clarke_1975} (see also~\cite[Proposition~2.2.4]{Yen_2007}) we get \begin{equation}\label{tangent_cone}  T_C(\bar{x})=\overline{\mbox{\rm cone}(C-\bar{x})}.\end{equation} Combining this with~\eqref{closure_cone_1} gives
	$$P_C\left(x-\dfrac{1}{\rho}\big(Q x+q\big)\right)-\bar{x} \in T_C(\bar{x}),$$
which implies that $F(\bar{x})\in T_C(\bar{x})$. 
	
Moreover, as $F$ is Lipschitz continuous by the proof of Theorem~\ref{global_sol_A}, it follows from~\cite[Theorem~4.4]{Clarke_1975} that $C$ is flow-invariant for $F$. Thus, for any trajectory $x(\cdot)$ with $x^0\in C$ of~\eqref{dynamic_sys_A}, we have $x(t)\in C$ for all $t\geq 0$.
		
	$\hfill\Box$
	\end{proof}
	
	\begin{theorem}\label{flow_invariant_B} For any $x^0\in C$, $\rho>0$, and $\eta>0$, the whole trajectory $x(\cdot)$ of~\eqref{dynamic_sys_B} is contained in $C$. 
	\end{theorem}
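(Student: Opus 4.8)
The plan is to follow the pattern of the proof of Theorem~\ref{flow_invariant_A} verbatim, replacing the vector field~$F$ of~\eqref{dynamic_sys_A} by the vector field $G(x)=\dfrac{1}{\eta}\left[F_C(x)-x\right]$ of~\eqref{dynamic_sys_B}. The strategy is to invoke Clarke's flow-invariance result~\cite[Theorem~4.4]{Clarke_1975}, which requires two ingredients: that $G$ is Lipschitz continuous on $\mathbb R^n$, and that the tangency condition $G(\bar x)\in T_C(\bar x)$ holds for every $\bar x\in C$, where $T_C(\bar x)$ is the Clarke tangent cone. The first ingredient is already available, since $G$ was shown to be globally Lipschitz with constant $\dfrac{1}{\eta}(\ell+1)$ in the proof of Theorem~\ref{global_sol_B}. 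So the whole task reduces to establishing the tangency condition.

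For the tangency condition, first I would recall the key structural fact that $F_C(\bar x)\in C$ for every $\bar x\in\mathbb R^n$: indeed, by definition $F_C(\bar x)$ is the (unique) minimizer of the strongly convex program~\eqref{auxiliary_B} over the feasible set $C$, and hence lies in $C$. Consequently, for $\bar x\in C$ one has
\begin{equation*}
F_C(\bar x)-\bar x\in C-\bar x\subset \mbox{\rm cone}(C-\bar x)\subset\overline{\mbox{\rm cone}(C-\bar x)}.
\end{equation*}
Since $C$ is convex, the tangent-cone identity~\eqref{tangent_cone}, namely $T_C(\bar x)=\overline{\mbox{\rm cone}(C-\bar x)}$, then gives $F_C(\bar x)-\bar x\in T_C(\bar x)$. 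Because $T_C(\bar x)$ is a cone and $\dfrac{1}{\eta}>0$, multiplying by the scalar $\dfrac{1}{\eta}$ keeps the vector inside the cone, so $G(\bar x)=\dfrac{1}{\eta}\left[F_C(\bar x)-\bar x\right]\in T_C(\bar x)$, as required.

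With both hypotheses of~\cite[Theorem~4.4]{Clarke_1975} verified, I would conclude that $C$ is flow-invariant for $G$, and therefore every trajectory $x(\cdot)$ of~\eqref{dynamic_sys_B} issuing from a point $x^0\in C$ satisfies $x(t)\in C$ for all $t\geq 0$. I do not anticipate any real obstacle here: the argument is structurally identical to that of Theorem~\ref{flow_invariant_A}, and the only point that differs is the replacement of the relation $P_C(\,\cdot\,)\in C$ by $F_C(\,\cdot\,)\in C$. The latter inclusion is immediate from the fact that $F_C(\bar x)$ solves a minimization problem whose feasible region is exactly $C$, so the verification of the tangency condition is no harder than in the explicit-scheme case.
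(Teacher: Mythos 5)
Your proposal is correct and follows essentially the same route as the paper's own proof: verify the tangency condition $G(\bar x)\in T_C(\bar x)$ via $F_C(\bar x)\in C$, the inclusions into $\overline{\mbox{\rm cone}(C-\bar x)}$, and the identity~\eqref{tangent_cone}, then combine with the Lipschitz continuity of $G$ from Theorem~\ref{global_sol_B} to apply Clarke's flow-invariance theorem. Your explicit remark that multiplying by the positive scalar $\dfrac{1}{\eta}$ stays inside the cone is a minor detail the paper leaves implicit, but the argument is otherwise identical.
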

	\begin{proof} Let $G(\cdot)$ be defined as in the proof of Theorem~\ref{global_sol_B}. Take any $\bar{x}\in C$. Since $F_C(\bar{x})\in C$, we obtain
	\begin{equation}\label{closure_cone_2}
	F_C(\bar{x})-\bar{x}\in C-\bar{x}\subset \mbox{\rm cone} (C-\bar{x})\subset\overline{\mbox{\rm cone} (C-\bar{x})}.
	\end{equation}
	As $C$ is a convex set, the Clarke tangent cone $T_C(\bar{x})$ to $C$ at $\bar{x}$ can be computed by~\eqref{tangent_cone}. This together with the inclusions in~\eqref{closure_cone_2} implies that $G(\bar{x})\in T_C(\bar{x})$. Furthermore, as shown in the proof of Theorem~\ref{global_sol_B}, $G$ is Lipschitz continuous. Since $\bar{x}\in C$ was chosen arbitrarily, we deduce from~\cite[Theorem~4.4]{Clarke_1975} that  $C$ is flow-invariant for $G$. Consequently, $x(t)\in C$ for all $t\geq 0$.
		$\hfill\Box$
	\end{proof}

	\begin{remark} {\rm 
	Since the proofs of Theorems~\ref{flow_invariant_A} and~\ref{flow_invariant_B}, do not rely on the assumption that $Q$ is a symmetric matrix, both theorems are applicable to general AVIs.}
	\end{remark}

\section{Which Affine Operators are Strongly Pseudomonotone?}\label{Sect-3}

As in~\cite[Definition~2.3.1]{FP_2003} and~\cite[Definition~2.1]{KV_14}, an operator $F:C\to\mathbb R^n$, where $C$ is a convex subset of $\mathbb R^n$, is said to be
	
(a) \textit{monotone} if $\langle F(y)-F(x),y-x\rangle\geq 0$ for all $x,y\in C$;

(b) \textit{pseudomonotone} if for all $x,y\in C$ it holds
$$ \langle F(x),y-x\rangle\geq 0\ \; \Longrightarrow\ \; \langle F(y),y-x\rangle\geq 0;$$
\hskip0.5cm (c) \textit{strongly monotone} if there exists a constant $\gamma>0$ such that $$\langle F(y)-F(x),y-x\rangle\geq\gamma\|y-x\|^2$$ for all $x,y\in C$;

(d) \textit{strongly pseudomonotone} if there exists a constant $\gamma>0$ such that 
\begin{equation}\label{gamma_spm}
\langle F(x),y-x\rangle\geq 0\ \; \Longrightarrow\ \; \langle F(y),y-x\rangle\geq \gamma\|y-x\|^2
\end{equation}
for all $x,y\in C$.

\medskip
If the property~\eqref{gamma_spm} holds for all $x,y\in C$, then one says that $F$ is\textit{~$\gamma$-strongly pseudomonotone on $C$}.

\begin{remark}\label{conic_property} {\rm If $F:C\to\mathbb R^n$ is a monotone (resp., pseudomonotone, strongly monotone, strongly pseudomonotone) operator and $\lambda$ is a positive real number, then the operator  $\lambda F:C\to\mathbb R^n$ defined by setting $(\lambda F)(x)=\lambda F(x)$ for all $x\in C$ is also monotone (resp., pseudomonotone, strongly monotone, strongly pseudomonotone). Thus, the set of  monotone (resp., pseudomonotone, strongly monotone, strongly pseudomonotone) operators from $C$ to $\R^n$ is a cone.}
\end{remark}

\medskip
As it has been noted in Section~\ref{Sect-1}, strong pseudomonotonicity of the operator defining a variational inequality is an important property, and many existing results rely on the latter. In an infinite-dimensional Hilbert space setting, Khanh and Vuong~\cite[Example~4.1]{KV_14} gave some examples of strongly pseudomonotone operators, which are neither strongly monotone nor monotone. Given by the formula $F_\beta(x)=(\beta-\|x\|)x$ on the set $C_r:=\{x\mid \|x\| \leq r\}$, where the constants $\beta$ and $r$ satisfy the conditions $\beta>r>\dfrac{\beta}{2}>0$, these operators are non-affine. In \cite{KV_14}, it is proved that $F_\beta$ is strongly pseudomonotone on $C_r$ with the coefficient $\alpha:=\beta-r>0$.

\medskip

Focusing on the restrictions of affine operators on convex sets (called \textit{affine operators on convex sets} for brevity), we have the following natural question. 

\medskip
\textbf{Question~5:} \textit{Whether there is a strongly pseudomonotone affine operator on a convex set, which is not strongly monotone on that set?}

\medskip The next example gives an answer in the affirmative to Question~5.

\begin{example}\label{1st_example}
{\rm Let $A: \R\to\R$ be defined by $A(x)=-x+1$ for $x\in\R$ and let $C=\left[0,\frac{1}{2}\right]$. For any $x,y\in C$, if 
\begin{equation}\label{ineq1}
		(-x+1)(y-x)\geq 0,
	\end{equation} 
then $y-x\geq 0$. Since $x,y\in C$, we also have $1\geq 2y-x$ or, equivalently, $1-y\geq y-x$. As $y-x\geq 0$, we can multiply both sides of the last inequality by $(y-x)$ to get
\begin{equation}\label{ineq2}
(-y+1)(y-x)\geq (y-x)^2.
\end{equation} 
Therefore, we have shown that if~\eqref{ineq1} holds, i.e., $\la A(x),y-x\ra \geq 0,$ then~\eqref{ineq2} is valid, i.e., $\la A(y),y-x\ra \geq |y-x|^2.$
So, $A$ is $\gamma$-strongly pseudomonotone on~$C$ with the constant $\gamma:=1$.

The affine function $A$ is non-monotone on $C$. Indeed, for any $x,y\in C$ with $x\neq y$ we have
\begin{equation*}
	\la A(y)-A(x),y-x\ra = \big[ (-y+1)-(-x+1)\big](y-x)=-(y-x)^2<0.
\end{equation*}
\hskip 0.5cm Thus, we have seen that the affine function $A$ is strongly pseudomonotone on~$C$, but not monotone on $C$.
}
\end{example}

Example~\ref{1st_example} shows that for affine operators on convex sets, strong pseudomonotonicity is a genuine relaxed version of strong monotonicity. In addition, the example demonstrates that strongly pseudomonotone affine operators on convex sets on convex sets may be non-monotone as well.

\medskip
The subset $C=\left[0,\frac{1}{2}\right]$ of $\mathbb R$ in Example~\ref{1st_example} is non-symmetric with respect to 0. So, it is of interest to examine the effects produced by the example for affine operators on convex sets which are symmetric w.r.t. the origin of the space under consideration. Note that the closed unit ball in $\mathbb R$ is exactly the segment $[-1,1]$, which is also a compact polyhedral convex set with nonempty interior. Hence, the next question is of interest as it seeks for a \textit{complete characterization of strong monotonicity for affine operators defined on the closed unit ball} of a special Euclidean space.

\medskip
\textbf{Question 6:} \textit{Let $\alpha,\beta\in\R$ and $F:\R\to \R$ be defined by $F(x)=\alpha x + \beta$ for $x\in\R$, where  $\alpha,\beta\in\R$. Under which conditions on $(\alpha,\beta)$ is $F$ strongly pseudomonotone on $C:=[-1,1]$?}

\begin{theorem}\label{Thm3.1}
	Let $F:\R\to \R$ be defined by $F(x)=\alpha x + \beta$ for $x\in\R$ with $\alpha,\beta\in\R$, and $C:=[-1,1]$. Then, $F$ is strongly pseudomonotone if and only if $(\alpha,\beta)$ belongs to the cone
	\begin{equation*}
		\mathcal{A}:= \big\{(\alpha,\beta)\in  \R^2\mid \alpha + \beta >0 \;\;\text{\rm or}\;\; \alpha-\beta  > 0\big\}.
	\end{equation*}
\end{theorem}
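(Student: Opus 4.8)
The plan is to work entirely in dimension one, where $\langle a,b\rangle=ab$ and $\|y-x\|^2=(y-x)^2$, and to read everything off from the two boundary values $F(1)=\alpha+\beta$ and $F(-1)=\beta-\alpha$. The first observation is that $(\alpha,\beta)\in\mathcal A$ is exactly the condition $F(1)>0$ or $F(-1)<0$, so its complement is the closed condition $F(1)\le 0\le F(-1)$; moreover, adding the two defining inequalities of $\mathcal A^{c}$ shows that $\alpha\le 0$ throughout the complement. Equivalently, $\mathcal A$ is the set where ``$\alpha>0$, or $F$ has no zero in $C$,'' and this geometric reading is the organizing idea that lines up the sign-cases with the two half-planes defining the cone. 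I would then prove sufficiency by splitting on the sign of $\alpha$.

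For sufficiency, if $\alpha>0$ then $F$ is strongly monotone with constant $\alpha$, since $\langle F(y)-F(x),y-x\rangle=\alpha(y-x)^2$; and strong monotonicity trivially yields strong pseudomonotonicity because $\langle F(y),y-x\rangle=\langle F(y)-F(x),y-x\rangle+\langle F(x),y-x\rangle$, so that the hypothesis makes the second summand nonnegative. (Every point with $\alpha>0$ already lies in $\mathcal A$, so this case is consistent.) If instead $\alpha\le 0$ and $(\alpha,\beta)\in\mathcal A$, I would first argue that $F$ has no zero in $C$: the affine root is $-\beta/\alpha$ (or, for $\alpha=0$, a zero would force $\beta=0$), and membership in $\mathcal A$ with $\alpha\le 0$ places this root outside $[-1,1]$. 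Hence $F$ keeps a single strict sign on the compact set $C$, and $m:=\min_{x\in C}|F(x)|>0$. I claim $F$ is $\gamma$-strongly pseudomonotone with $\gamma:=m/2$. Indeed, sign-constancy forces $\langle F(x),y-x\rangle\ge 0$ to hold only when $y-x$ has the same sign as $F$ (or $y=x$), so $\langle F(y),y-x\rangle=|F(y)|\,|y-x|\ge m\,|y-x|$; since $|y-x|\le 2$ on $C$, this is at least $(m/2)|y-x|^2=\gamma(y-x)^2$.

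For necessity I would argue by contraposition. Assuming $(\alpha,\beta)\notin\mathcal A$, the first paragraph gives $F(-1)\ge 0\ge F(1)$ and $\alpha\le 0$, so by the intermediate value theorem $F$ vanishes at some $x_0\in C$, and affineness with slope $\alpha$ gives $F(y)=\alpha(y-x_0)$ for all $y$. Now test the pair $(x,y)=(x_0,y)$ with any $y\in C\setminus\{x_0\}$: the hypothesis $\langle F(x_0),y-x_0\rangle=0\ge 0$ is automatic, while $\langle F(y),y-x_0\rangle=\alpha(y-x_0)^2\le 0$, which is strictly below $\gamma(y-x_0)^2$ for every $\gamma>0$. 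Thus no admissible constant can exist and $F$ is not strongly pseudomonotone.

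The routine parts are the sign bookkeeping and the elementary bound $|y-x|\le 2$ on $C$ that calibrates $\gamma=m/2$. The real crux is the necessity direction: recognizing that the complement of $\mathcal A$ is exactly the regime in which $F$ is non-increasing ($\alpha\le 0$) and has a root inside $C$, and that evaluating the strong-pseudomonotonicity inequality at such a root is precisely what breaks it.
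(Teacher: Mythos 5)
Your proof is correct, and it takes a genuinely different route from the paper's. The paper argues by exhaustive sign analysis of $(\alpha,\beta)$ --- six cases plus subcases --- constructing in each failure case an explicit pair $(x,y)$ violating (plain) pseudomonotonicity, and in each good case deriving the defining inequality by hand with a case-specific constant ($\gamma=\alpha$, $|\beta|/2$, $(\alpha+\beta)/2$, or $(\alpha-\beta)/2$). You instead isolate a single structural dichotomy: on the complement of $\mathcal{A}$ one has $F(1)\le 0\le F(-1)$, hence $\alpha\le 0$ and a root $x_0\in C$; on $\mathcal{A}$ either $\alpha>0$ (strong monotonicity suffices) or $F$ is sign-constant on $C$ with $m:=\min_{x\in C}|F(x)|>0$. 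This yields a uniform sufficiency constant $\gamma=m/2$ --- which reproduces the paper's constants, since $m=\alpha+\beta$, $\alpha-\beta$, or $|\beta|$ in the respective cases --- and a one-line necessity argument: testing the implication at $x=x_0$ gives $\langle F(x_0),y-x_0\rangle=0$ while $\langle F(y),y-x_0\rangle=\alpha(y-x_0)^2\le 0<\gamma(y-x_0)^2$ for $y\ne x_0$, so no $\gamma>0$ can work. What the paper's longer route buys is slightly stronger information in the failure region: except for $F\equiv 0$, it shows that even plain pseudomonotonicity fails there, whereas your contrapositive shows (and only needs) failure of \emph{strong} pseudomonotonicity. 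What your route buys is brevity, a conceptual criterion (strong pseudomonotonicity on $C$ holds iff $\alpha>0$ or $F$ has no zero on $C$), and an argument that transfers verbatim to any compact interval, where the paper's endpoint-by-endpoint computations would have to be redone.
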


\begin{figure}[!ht]
	\begin{center}
		\includegraphics[width=0.5\textwidth]{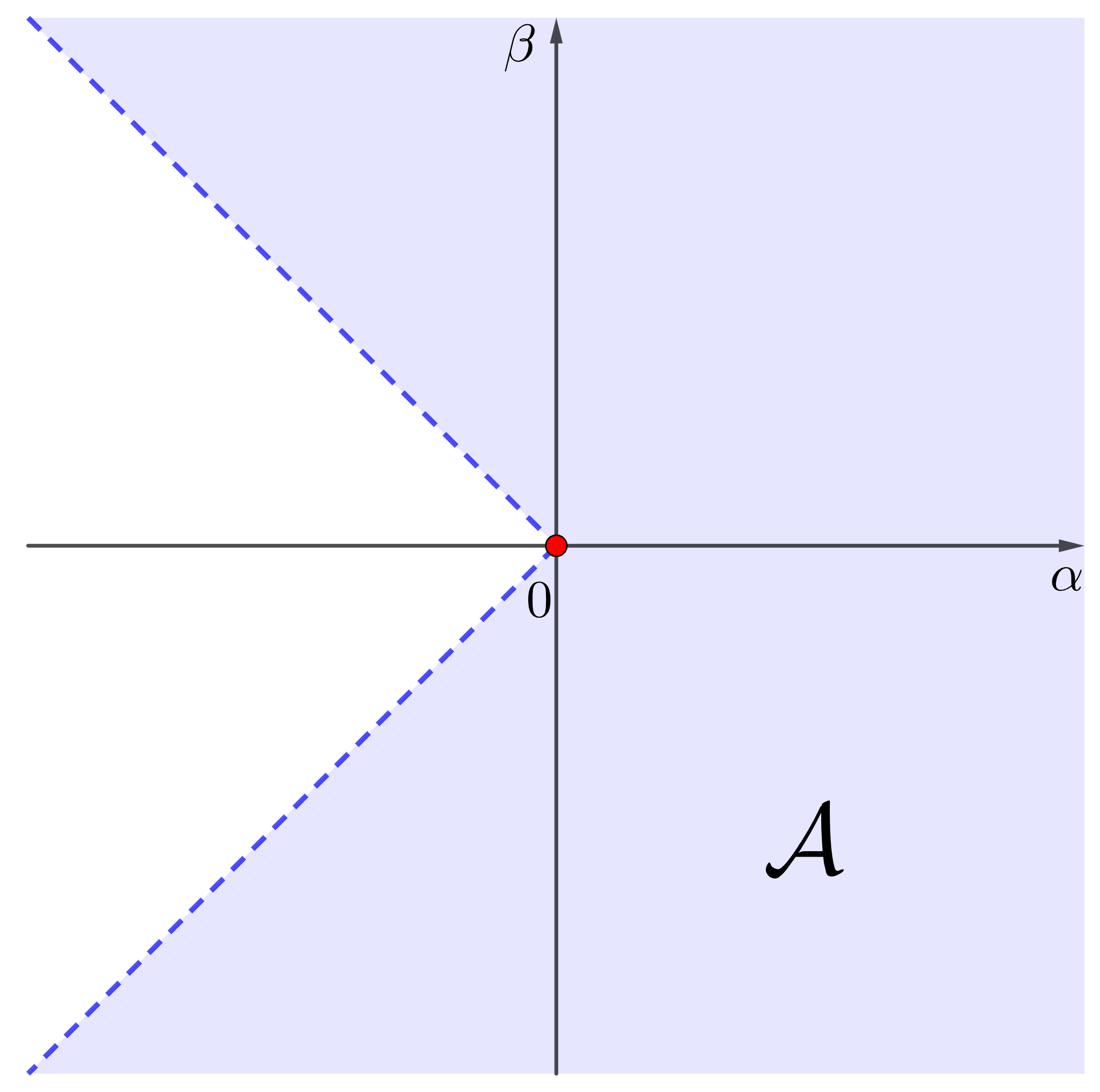}
		\caption{The cone $\mathcal{A}$ is non-closed}
	\end{center}
\end{figure}
\begin{proof}  Depending on the position of the parameter pair $(\alpha,\beta)$ in $\R^2$, we have the following six cases.
	
{\bf Case 1:} $\alpha =\beta=0$. We have $F(x)\equiv 0$ for all $x\in \R$. Thus, $F$ is not strongly pseudomonotone on $C$.
	
	{\bf Case 2:} $\alpha = 0$ and $\beta \neq 0$. In this case, $F(x)=\beta$ for all $x\in \R$. If $x,y \in C$ are such that $\beta(y-x) \geq 0$, then we have
	\begin{equation*}
		\beta(y-x) = |\beta||y-x| =\dfrac{|\beta|}{2} (2|y-x|) \geq \dfrac{|\beta|}{2} (y-x)^2,
	\end{equation*}
	where the last inequality follows from the fact that $|y-x|\leq 2$ for any $x,y\in C$. So, $F$ is~$\gamma$-strongly pseudomonotone on $C$ with $\gamma =\dfrac{|\beta|}{2}$.
	
    {\bf Case 3:} $\alpha >0$. Then, for all $x,y\in C$ and for all $\beta\in\R$, one has
	\begin{equation*}
		\langle F(y)- F(x), y-x\rangle = \big[(\alpha y +\beta) -(\alpha x +\beta)\big](y-x)= \alpha (y-x)^2.
	\end{equation*}
	Therefore, $F$ is $\gamma$-strongly monotone on $C$ for $\gamma:=\alpha$.
	
	{\bf Case 4:} $\alpha <0$ and $\beta=0$. Letting $x=-1$ and $y=1$, we have
	$$\la F(x),y-x\ra = \alpha x (y-x) = -2\alpha >0,$$
	while 
	$$\la F(y),y-x\ra=\alpha y (y-x)= 2\alpha <0.$$
	Hence, $F$ is not pseudomonotone on $C$.
	
	{\bf Case 5:} $\alpha <0$ and $\beta >0$. We will consider the 3 following subcases.
	
	$\quad${\bf Subcase 5a:} $\alpha< - \beta$, i.e., $\alpha + \beta < 0$. We have  $ \dfrac{-\beta}{\alpha} \in (0,1)$. Choose $x= \dfrac{-\beta}{\alpha}$ and $y=1$. Then, on one hand it holds that 
	$$\la F(x),y-x\ra =(\alpha x+\beta)(y-x)= \left(\alpha\dfrac{-\beta}{\alpha}+\beta\right)\left(1-\dfrac{-\beta}{\alpha}\right)=0.$$
	On the other hand, we have
	$$\la F(y),y-x\ra = (\alpha y+\beta)(y-x) = (\alpha + \beta)\left(1- \dfrac{-\beta}{\alpha}\right) < 0.$$
	Therefore, $F$ is not pseudomonotone on $C$.
	
	$\quad${\bf Subcase 5b:} $\alpha= -\beta$, i.e., $\alpha+\beta=0$. Taking $x=1$ and $y=-1$, we have
	$$\la F(x),y-x\ra = (\alpha x+\beta) (y-x) =-2(\alpha +\beta)=0,$$
	and 
	$$\la F(y),y-x\ra=(\alpha y +\beta) (y-x)= -2(-\alpha +\beta) =-2\beta <0.$$
	This implies that $F$ is not pseudomonotone on $C$.
	
	$\quad${\bf Subcase 5c:} $\alpha> -\beta$, i.e., $\alpha + \beta > 0$. Since $x\leq 1$ for all $x\in C$, it follows that
	$$-\alpha x \leq -\alpha <\beta,$$
	which implies $\alpha x +\beta >0$ for all $x\in C$. Let $x,y\in C$ be such that 
	$$\la F(x),y-x\ra = (\alpha x+\beta) (y-x) \geq 0.$$
	This condition is equivalent to $y-x \geq 0$. As $\alpha <0$ and $x\leq 1$, we have
	$$\alpha x+\beta \geq \alpha +\beta = \dfrac{\alpha+\beta}{2}-\dfrac{\alpha+\beta}{2}(-1).$$
	Moreover, since $\alpha + \beta > 0$, $y\leq 1$ and $x \geq -1$, this yields
	$$\alpha x+\beta\geq  \dfrac{\alpha+\beta}{2}y-\dfrac{\alpha+\beta}{2}x.$$
	Multiplying both sides of the last inequality by $(y-x)$, we obtain
	$$\la F(y),y-x\ra= (\alpha y +\beta) (y-x)\geq \dfrac{\alpha+\beta}{2}(y-x)^2.$$
	Thus, $F$ is $\gamma$-strongly pseudomonotone on $C$ with $\gamma =\dfrac{\alpha+\beta}{2}$.
	
	{\bf Case 6:} $\alpha <0$ and $\beta <0$. In this case,  $\dfrac{-\beta}{\alpha}<0$.
	
	$\quad${\bf Subcase 6a:} $\alpha \leq \beta$. Then, $\dfrac{\beta}{\alpha} \leq 1$ or, equivalently, $-1\leq \dfrac{-\beta}{\alpha}$. Choose $x= \dfrac{-\beta}{\alpha}$, $y=1$, and observe that $x,y\in C$. Clearly,
	$$\la F(x),y-x\ra = (\alpha x+\beta) (y-x) = 0.$$
	Since $\alpha +\beta <0$ and $\dfrac{-\beta}{\alpha}<0$, we have
	$$\la F(y),y-x\ra= (\alpha y +\beta) (y-x) = (\alpha +\beta) \left(1-\dfrac{-\beta}{\alpha}\right) <0.$$
	Hence, $F$ is not pseudomonotone on $C$.
	
	$\quad${\bf Subcase 6b:} $\alpha > \beta$. This implies that  $\alpha-\beta>0$ and $\dfrac{-\beta}{\alpha} <-1$. For any $x\in C$, one has $x\geq -1 >\dfrac{-\beta}{\alpha}$. Thus, $\alpha x + \beta <0$ for all $x\in C$. Let $x,y\in C$ be such that
	$$\la F(x),y-x\ra = (\alpha x+\beta) (y-x) \geq 0.$$
	Since $\alpha x + \beta <0$ for every $x\in C$, we have $y-x \leq 0$. In addition, as $y\geq -1$ and $\alpha <0$, we have
	$$\alpha y +\beta \leq -\alpha +\beta= -\dfrac{\alpha-\beta}{2}-\dfrac{\alpha-\beta}{2} = \dfrac{\alpha-\beta}{2}(-1) - \dfrac{\alpha-\beta}{2}.$$
	Combining this with the fact that $x,y\in [-1,1]$ gives
	$$\alpha y +\beta \leq \dfrac{\alpha-\beta}{2} y -\dfrac{\alpha-\beta}{2} x.$$
	Multiplying both sides of the last inequality by $(y-x)$ yields
	$$\la F(y),y-x\ra= (\alpha y +\beta) (y-x)  \geq  \dfrac{\alpha-\beta}{2}(y-x)^2.$$
	Therefore, $F$ is $\gamma$-strongly pseudomonotone on $C$ with $\gamma =  \dfrac{\alpha-\beta}{2}$.
	
	In conclusion, $F$ is strongly pseudomonotone on $C$ if and only if $(\alpha,\beta)$ is such that one of the Cases~2 and 3, or one of the Subcases 5c and 6b, occurs. Equivalently, $F$ is strongly pseudomonotone on $C$ if and only if  $(\alpha,\beta)\in\mathcal{A}$. $\hfill\Box$
\end{proof}

\begin{remark} The cone  $\mathcal{A}$ is nonconvex. Indeed, taking any $(\alpha_1,\beta_1)$ and $(\alpha_2,\beta_2)$ in $\R^2$ with $\alpha_1<0$, $\alpha_2<0$, $\beta_1>-\alpha_1$ and $\beta_2<\alpha_2$, we see that the line segment joining $(\alpha_1,\beta_1)$ with $(\alpha_2,\beta_2)$ does not lie entirely in $\mathcal{A}$. If one requires in addition that $\beta_1+\beta_2=0$, then the midpoint of that segment does not belong to $\mathcal{A}$. Since the latter is a cone, $(\alpha_1,\beta_1)+(\alpha_2,\beta_2)\notin \mathcal{A}$. This means that \textit{the sum of two strongly pseudomonotone affine operators on a compact polyhedral convex set in $\mathbb R^n$ may not be strongly pseudomonotone on the set.} To have another example of this type, take any $(\alpha,\beta)\in \mathcal{A}$ with $\alpha<0$ and note that $(-\alpha,-\beta)\in\mathcal{A}$, but the sum  $(\alpha,\beta)+(-\alpha,-\beta)=(0,0)$ does not belong to $\mathcal{A}$.
\begin{figure}[!ht]
	\centering
	\includegraphics[width=0.5\textwidth]{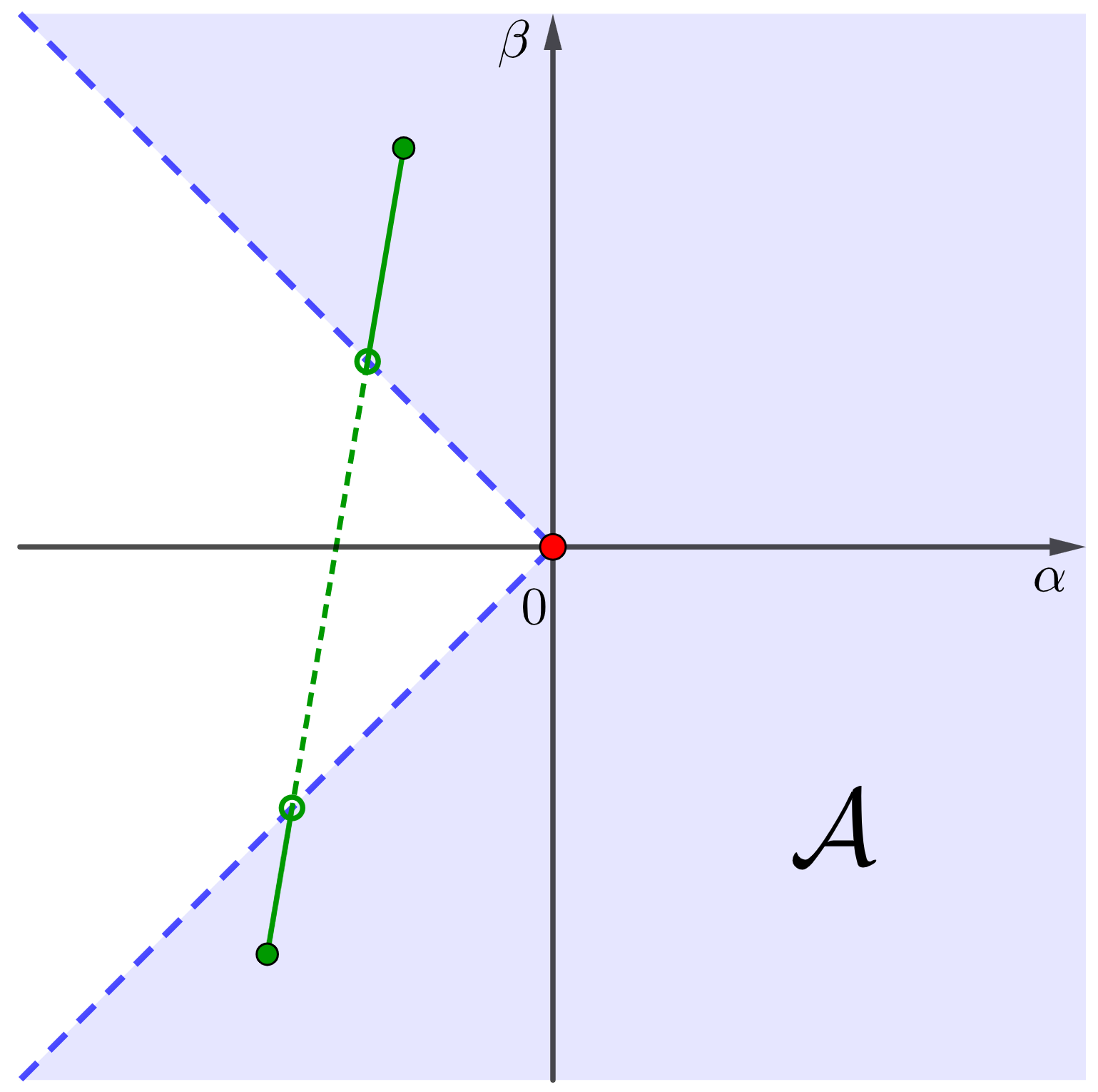}
	\caption{The cone $\mathcal{A}$ is nonconvex}
\end{figure}
\end{remark}
\section{When the Trajectories of the Dynamical System Converge to the Karush-Kuhn-Tucker Points?}\label{Sect-4}

As in Theorem~\ref{Thm3.1}, let $F:\R\to \R$ be defined by $F(x)=\alpha x + \beta$ for $x\in\R$ with $\alpha,\beta$ being real constants, and let $C=[-1,1]$. Consider the (possibly nonconvex) quadratic optimization problem
\begin{equation}\label{trusted_reg_sub_1}
	\min \left\{f(x):=\dfrac{1}{2} \alpha x^2 +\beta x \mid x\in C\right\}.
\end{equation}
According to the generalized Fermat's rule (see, for example,~\cite[p.~2]{ks80} and~\cite[Theorem~3.1]{lty05}), if $\bar{x}\in C$ is a local solution of~\eqref{trusted_reg_sub_1}, then 
\begin{equation*}
	\nabla f(\bar{x}) (y-\bar{x}) \geq 0,\quad\mbox{\rm for all}\;\;y\in C,
\end{equation*}
or, equivalently, 
\begin{equation}\label{fermat_rule_1}
	(\alpha\bar{x}+\beta) (y-\bar{x}) \geq 0,\quad\mbox{\rm for all}\;\;y\in C.
\end{equation}
In agreement with the definition recalled in Section~\ref{Sect-1}, $\bar{x}\in C$ is said to be a Karush-Kuhn-Tucker point (a KKT point) of~\eqref{trusted_reg_sub_1} if the condition~\eqref{fermat_rule_1} is satisfied.

For some $x^0\in C$, let $x(\cdot)$ be the solution of the following dynamical system
\begin{equation}\label{dynamic_sys_A1}
	\begin{cases}
		\dot{x}(t)= \dfrac{1}{\eta} \left[P_C\left(x(t)-\dfrac{1}{\rho}\big(\alpha x(t)+\beta\big)\right)-x(t)\right], \quad\mbox{\rm for all}\;\; t>0,\\
		x(0)= x^0.
	\end{cases}
\end{equation}
where $\eta$ and $\rho$ are fixed positive constants.

\medskip
\textbf{Question 7:} \textit{Does $x(t)$ converge to a KKT point of~\eqref{trusted_reg_sub_1} as $t\to\infty$ for any $(\alpha,\beta)\in\R\times\R$  and for any $x^0\in [-1,1]$, or not?}

\medskip
A partial answer to this question is given in the next proposition.

\begin{proposition}\label{prop1:dim1}
	For any $(\alpha,\beta)\in\R\times\R$ satisfying $\alpha<0$ and for any $x^0\in [-1,1]$, the unique solution $x(t)$ of~\eqref{dynamic_sys_A1}  converges to a KKT point of~\eqref{trusted_reg_sub_1} as $t\to\infty$. 
\end{proposition}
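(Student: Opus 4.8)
The plan is to read \eqref{dynamic_sys_A1} as the scalar autonomous initial value problem $\dot x=V(x)$ on the compact interval $C=[-1,1]$, where
\[
V(x)=\frac{1}{\eta}\Big[P_C\big(x-\tfrac{1}{\rho}(\alpha x+\beta)\big)-x\Big],
\]
and to exploit that, in one dimension, a bounded trajectory of such a system is automatically monotone and hence convergent. Since \eqref{dynamic_sys_A1} is exactly \eqref{dynamic_sys_A} in the case $n=1$, $Q=\alpha$, $q=\beta$, Theorem~\ref{global_sol_A} yields a unique global $C^1$ solution $x(\cdot)$, while Theorem~\ref{flow_invariant_A} (applicable because $x^0\in C$) gives $x(t)\in[-1,1]$ for every $t\ge 0$. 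Observe also that $\alpha<0<\rho$ automatically fulfils the standing requirement $\rho>\lambda_n(Q)=\alpha$, so no extra restriction on $\rho$ is needed.

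Next I would identify the rest points of $V$ with the KKT points of \eqref{trusted_reg_sub_1}. By the classical projection characterization of variational inequalities, for the fixed $\lambda:=1/\rho>0$ one has $P_C\big(\bar x-\lambda(\alpha\bar x+\beta)\big)=\bar x$ if and only if $(\alpha\bar x+\beta)(y-\bar x)\ge 0$ for all $y\in C$, that is, if and only if $\bar x$ satisfies \eqref{fermat_rule_1}. Hence the zeros of the (globally Lipschitz, thus continuous) field $V$ in $[-1,1]$ are precisely the KKT points, and these are exactly the points where $\dot x$ vanishes.

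The convergence step is where the real content lies. Because $V$ is Lipschitz, solutions are unique, so a trajectory cannot reverse its direction of motion without first passing through a zero of $V$, which uniqueness forbids; consequently $t\mapsto x(t)$ is monotone. Being also confined to the compact set $[-1,1]$, it converges to some $\bar x\in[-1,1]$. Letting $t\to\infty$ in $\dot x(t)=V(x(t))$ and using continuity of $V$ together with the boundedness of $x(\cdot)$ forces $V(\bar x)=0$; thus $\bar x$ is a rest point and, by the previous paragraph, a KKT point of \eqref{trusted_reg_sub_1}. I expect making the monotonicity rigorous --- ruling out oscillation via uniqueness and confinement --- to be the main (though, in dimension one, entirely manageable) obstacle.

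For $\alpha<0$ the picture can be made fully explicit, which both reinforces the argument and explains the clean behaviour. The field $V$ is piecewise affine with three branches according to whether $g(x):=x-\frac{1}{\rho}(\alpha x+\beta)$ lies below $-1$, in $[-1,1]$, or above $1$; on the middle branch $V(x)=-\frac{\alpha}{\eta\rho}(x-x^*)$ with $x^*=-\beta/\alpha$ and slope $-\alpha/(\eta\rho)>0$, so the interior equilibrium (when $x^*\in(-1,1)$) repels while the endpoints $\pm1$ attract, giving monotone convergence to $1$, to $-1$, or to $x^*$ according to the position of $x^0$. A dimension-free alternative for securing convergence is to verify that $f$ is a strict Lyapunov function: the projection inequality $(\alpha x+\beta)\big(P_C(g(x))-x\big)\le-\rho\,\big(P_C(g(x))-x\big)^2$ gives $\frac{d}{dt}f(x(t))\le-\rho\eta\,|\dot x(t)|^2\le 0$, whence $\dot x(t)\to0$ and, combined with monotonicity, the limit is a single KKT point.
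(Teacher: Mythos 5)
Your proof is correct, but it follows a genuinely different route from the paper's. The paper computes the projection explicitly, introduces the thresholds $\mu_1=\frac{\beta-\rho}{\rho-\alpha}$, $\mu_2=\frac{\beta+\rho}{\rho-\alpha}$, and runs a five-case analysis (with subcases) on the position of $x^0$ relative to $[\mu_1,\mu_2]$, solving the resulting linear ODE in closed form on each branch via~\cite[Formula~(3.43), p.~53]{Teschl_2012} and patching the pieces together by continuation arguments; this yields explicit exponential formulas for $x(t)$ and identifies precisely which KKT point ($-1$, $1$, or $-\beta/\alpha$) is the limit for each initial condition. You instead invoke the qualitative theory of scalar autonomous ODEs: by uniqueness (Picard--Lindel\"of, both forward and backward), a nonconstant trajectory never meets a zero of the Lipschitz field $V$, so $\dot x$ keeps a constant sign and $x(\cdot)$ is monotone; confinement to $[-1,1]$ (Theorem~\ref{flow_invariant_A}) then forces convergence to some $\bar x$, which must be a zero of $V$ since otherwise $|\dot x(t)|$ would stay bounded away from $0$ and contradict boundedness; finally, the classical projection characterization of variational inequalities identifies zeros of $V$ with solutions of~\eqref{fermat_rule_1}, i.e., with KKT points. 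Each step checks out, including your fixed-point equivalence (valid for the step length $1/\rho>0$) and the Lyapunov inequality $\frac{d}{dt}f(x(t))\le-\rho\eta\,|\dot x(t)|^2$, which follows from the projection inequality with $y=x(t)$. What your approach buys is brevity and generality: nowhere do you use $\alpha<0$, so the same argument covers all $(\alpha,\beta)$ and all $\rho,\eta>0$, thereby giving Theorem~\ref{Thm4.1} without the restriction $\rho\ge\alpha$ --- consistent with the paper's closing remark citing Antipin. What it loses is the quantitative information the paper extracts (explicit trajectories, exponential rate, and the exact limit point as a function of $x^0$), and it is intrinsically one-dimensional: the monotonicity mechanism has no analogue for $n\ge 2$, whereas your Lyapunov alternative, as stated, would need an additional argument (e.g., LaSalle plus isolatedness of equilibria) to pin down convergence to a single point in higher dimensions.
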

\begin{proof}
	Let $\alpha<0$ and $x^0\in [-1,1]$ be given arbitrarily. Let $x(t)$ be the unique solution of the initial value problem~\eqref{dynamic_sys_A1}. To study the behavior of $x(t)$,   note that  
		\begin{equation}\label{proj_u} P_C(u)=\begin{cases}
		-1\qquad &\mbox{\rm for}\;\;u<-1,\\
		1&\mbox{\rm for}\;\;u>1,\\
		u&\mbox{\rm for}\;\;-1\leq u\leq1.
	\end{cases}
	\end{equation}
	Furthermore, for our convenience,  we put $\mu_1= \dfrac{\beta-\rho}{\rho-\alpha}$, $\mu_2 =\dfrac{\beta+\rho}{\rho-\alpha}$, and observe from the inequalities  $\rho -\alpha >0$ and $\rho >0$  that $\mu_1<\mu_2$. For every $x\in \R$, let $u(x)=x-\dfrac{1}{\rho}(\alpha x+\beta)$. Then,  the following assertions hold:
	\begin{itemize}
		\item[\rm (i)] $u(x)<-1$ if and only if $x<\mu_1$;
		\item[\rm (ii)] $u(x)>1$ if and only if $x>\mu_2$;
		\item[\rm (iii)]  $-1\leq u(x) \leq 1$  if and only if $\mu_1\leq x\leq \mu_2$.
	\end{itemize}
	Therefore, the differential relation in~\eqref{dynamic_sys_A1} means that
	\begin{equation}\label{dot_xt}
		\dot{x}(t)=\begin{cases}
			\dfrac{1}{\eta}\big(-1-x(t)\big)\qquad &\mbox{\rm if}\;\; x(t) < \mu_1,\\[2ex]
			\dfrac{1}{\eta}\big(1-x(t)\big) &\mbox{\rm if}\;\; x(t) > \mu_2,\\[2ex]
			-\dfrac{1}{\eta\rho}\big(\alpha x(t) +\beta\big)&\mbox{\rm if}\;\;\mu_1\leq x(t)\leq \mu_2.
		\end{cases}
	\end{equation}
	Based on the relationships between the initial point $x^0$ and the segment $[\mu_1,\mu_2]\subset \R$,  we distinguish the next cases.
	
	\smallskip
	{\bf Case 1:} $x^0 < \mu_1$. In this case,  we have $-1\leq x^0< \mu_1=\dfrac{\beta-\rho}{\rho-\alpha}$. It follows that $\alpha-\rho<\beta-\rho$; hence $\beta > \alpha$. So, $\bar{x} =-1$ satisfies the condition~\eqref{fermat_rule_1}, i.e., $\bar{x}=-1$ is a KKT point of~\eqref{trusted_reg_sub_1}. Since $x^0<\mu_1$, by the continuity of $x(\cdot)$ we can find some $\tau >0$ such that 
	$$x(t) < \mu_1\quad\mbox{for all} \;\;t\in [0,\tau].$$
	Hence, using~\eqref{dot_xt}, we have the following initial value problem for a first-order linear ordinary differential equation with constant coefficients
	\begin{equation*}\label{dynamic_sys_A1_c1}
		\begin{cases}
			\dot{x}(t)= -\dfrac{1}{\eta}x(t)-\dfrac{1}{\eta}, \quad\mbox{\rm for all}\;\; t\in[0,\tau],\\
			x(0)= x^0.
		\end{cases}
	\end{equation*}
	So, from the result in~\cite[Formula~(3.43), p.~53]{Teschl_2012} it follows that
	\begin{equation}\label{ode_sol_1}
		x(t)=\left(x^0+1\right)e^{-\frac{1}{\eta}t}-1
	\end{equation}
	for all $t\in[0,\tau]$. Since $e^{-\frac{1}{\eta}t} >0$ for all $t$ and $x^0 <\mu_1$, one has
	$$x(t)=\left(x^0+1\right)e^{-\frac{1}{\eta}t}-1 < \left(\mu_1+1\right)e^{-\frac{1}{\eta}t}-1,$$
	for all $t\in [0,\tau]$.
	Moreover, as $\mu_1>-1$, this implies that
	$$x(t) < \left(\mu_1+1\right)-1 = \mu_1, \quad\mbox{\rm for all}\;\; t\in [0,\tau].$$
	In particular, $x(\tau)<\mu_1$. 
	
	\smallskip
	{\sc Claim 1:} \textit{The solution~\eqref{ode_sol_1} can be extended to the whole half-line $[0,\infty)$.}
	
	\smallskip
	Indeed, letting $\tau$ play the role of the previous initial time $t=0$ and $x(\tau)$ play the role of the former $x^0$, we can deduce from the previous passage of proof that $x(t)$ is of the form~\eqref{ode_sol_1} for all $t\in[\tau,2\tau]$. Then, letting $2\tau$ play the role of the initial time and $x(2\tau)$ play the role of the initial value, we see that $x(t)$ is of the form~\eqref{ode_sol_1} for all $t\in[2\tau,3\tau]$. Therefore, by induction we can extend the formula~\eqref{ode_sol_1} for $x(t)$ to $[0,\infty)$.
	
	According to Claim 1,  we have $x(t)= \left(x^0+1\right)e^{-\frac{1}{\eta}t}-1$ for all $t\geq 0$.  It follows that $\lim\limits_{t\to\infty} x(t) =-1$. In other words, under the assumption made in this case,  $x(t)$ converges to the KKT point $\bar{x}=-1$ of~\eqref{trusted_reg_sub_1} as $t\to\infty$. 
	
	\smallskip
	{\bf Case 2:} $x^0 =\mu_1$. Since $-1\leq x^0=\mu_1=\dfrac{\beta-\rho}{\rho-\alpha}$, one has $\alpha \leq \beta$. Therefore,
	\begin{equation}\label{eq1}
		\mu_1+\dfrac{\beta}{\alpha} =\dfrac{\beta-\rho}{\rho-\alpha} +\dfrac{\beta}{\alpha} =\dfrac{\rho(\beta-\alpha)}{\alpha(\rho-\alpha)}\leq 0.
	\end{equation}
	Setting $\bar{x}=-1$, we deduce from the relation $\beta-\alpha\geq 0$ and~\eqref{fermat_rule_1} that $\bar{x}$ is a KKT point of~\eqref{trusted_reg_sub_1}. 
	
	If $\alpha = \beta$, then we have $x^0= \dfrac{\beta-\rho}{\rho-\alpha}=-1$.  As $x(t)\in C$ for all $t\geq 0$, it holds that
	$$x(t) \geq-1=\mu_1\quad \mbox{\rm for all}\;\; t\geq 0.$$  Since $\mu_1<\mu_2$, by the continuity of  $x(\cdot)$ we can find a number $\tau >0$ such that $\mu_1\leq x(t)\leq \mu_2$ for all $t\in[0,\tau]$.  Combining this with~\eqref{dot_xt} yields
	\begin{equation*}\label{dynamic_sys_A1_c2}
		\begin{cases}
			\dot{x}(t)= 	-\dfrac{\alpha}{\eta\rho} x(t) -\dfrac{\beta}{\eta\rho} , \quad\mbox{\rm for all}\;\; t\in[0,\tau],\\
			x(0)= x^0.
		\end{cases}
	\end{equation*}
	Hence, by~\cite[Formula~(3.43), p.~53]{Teschl_2012} we get
	\begin{equation}\label{ode_sol_2}
		x(t)=	\left(x^0+\dfrac{\beta}{\alpha}\right)e^{\frac{-\alpha}{\eta\rho}t} -\dfrac{\beta}{\alpha},
	\end{equation}
	for all $t\in[0,\tau]$.
	Recalling that $\alpha=\beta$ and $x^0=-1$, from this we can infer that $x(t)=-1$ for all $t\in [0,\tau]$. In particular, $x(\tau)=-1$. Arguing similarly as in the proof of Claim 1, we get $x(t)=-1$ for every $t\in [0,\infty)$. So, the assertion of the proposition is valid.
	
	Now, assume that $\alpha <\beta$. We have  $x(0)=x^0=\mu_1$ and $\mu_1<\mu_2$. If there exists $\bar{t} >0$ such that $x(\bar{t}) < \mu_1$ then, arguing similarly as in Case 1, we can show that $$x(t)=\left(x(\bar{t})+1\right)e^{-\frac{1}{\eta}(t-\bar{t})}-1$$ for every $t\in [\bar{t},\infty)$. It follows that $\lim\limits_{t\to \infty} x(t) =-1$, i.e.,  $x(t)$ tends to the KKT point $\bar{x}=-1$ of~\eqref{trusted_reg_sub_1} as $t\to\infty$.  If $x(t)\geq \mu_1$ for all $t\geq 0$ then, by the continuity of  $x(\cdot)$, there exists $\tau >0$ such that $\mu_1\leq x(t)\leq \mu_2$ for all $t\in[0,\tau]$. So, applying~\eqref{dot_xt} and~\cite[Formula~(3.43), p.~53]{Teschl_2012}, we can infer that  $x(t)$ is of the form~\eqref{ode_sol_2} for all $t\in[0,\tau]$. Furthermore, since $\alpha <\beta$, the last inequality in~\eqref{eq1} is strict; hence, $\mu_1+ \dfrac{\beta}{\alpha} <0$. Besides, as $\alpha <0$, we have $e^{\frac{-\alpha}{\eta\rho}t} > 1$ for all $t> 0$. Therefore, for every $t\in(0,\tau]$, it holds that 
	$$x(t)=\left(x^0+\dfrac{\beta}{\alpha}\right)e^{\frac{-\alpha}{\eta\rho}t} -\dfrac{\beta}{\alpha}= \left(\mu_1+\dfrac{\beta}{\alpha}\right)e^{\frac{-\alpha}{\eta\rho}t} -\dfrac{\beta}{\alpha} <	\left(\mu_1 +\dfrac{\beta}{\alpha}\right) -\dfrac{\beta}{\alpha} = \mu_1.$$
	We have arrived at a contradiction. Thus, the situation $x(t) \geq \mu_1$ for all $t\geq 0$ is excluded.

	{\bf Case 3:} $x^0>\mu_2$. Since $x^0\leq 1$ and $\mu_2=\dfrac{\beta+\rho}{\rho-\alpha}$, this implies that $\beta <-\alpha$. For $\bar{x}:=1$, we have
	$$\alpha \bar{x}+\beta =\alpha+\beta <0.$$
	So, from~\eqref{fermat_rule_1} it follows that $\bar{x}=1$ is a KKT point of~\eqref{trusted_reg_sub_1}. As $x(\cdot)$ is continuous on $[0,\infty)$ and $x(0)=x^0>\mu_2$, there is $\tau>0$ such that $x(t)> \mu_2$
	for all $t\in[0,\tau]$. Therefore, by~\eqref{dot_xt} we have
	\begin{equation*}\label{dynamic_sys_A1_c4}
		\begin{cases}
			\dot{x}(t)= -\dfrac{1}{\eta}x(t)+\dfrac{1}{\eta}, \quad\mbox{\rm for all}\;\; t\in[0,\tau]\\
			x(0)=x^0.
		\end{cases}
	\end{equation*}
	Using~\cite[Formula~(3.43), p.~53]{Teschl_2012} gives
	\begin{equation*}\label{ode_sol_3} 
		x(t)= \left(x^0-1\right)e^{-\frac{1}{\eta}t}+1,
	\end{equation*}
	for all $t\in[0,\tau]$. Hence, 
	$$x(t)= \left(x^0-1\right)e^{-\frac{1}{\eta}t}+1> \left(\mu_2 -1\right)e^{-\frac{1}{\eta}t}+1,$$
	for all $t\in[0,\tau]$. Since $e^{-\frac{1}{\eta}t}\leq 1$ for every $t\geq 0$ and $$\mu_2-1 < x^0-1\leq 0,$$ this strict lower estimate for $x(t)$ yields
	$$x(t) > \left(\mu_2 -1\right)+1 = \mu_2,$$
	for all $t\in[0,\tau]$. In particular, $x(\tau) >\mu_2$. Arguing similarly as in the proof of Claim 1, we have $x(t)= \left(x^0-1\right)e^{-\frac{1}{\eta}t}+1$ for all $t\geq 0$. So, $x(t)$ tends to the KKT point $\bar{x}=1$ as $t\to\infty$.

	{\bf Case 4:} $x^0=\mu_2$. Then, we have $$1\geq x^0=\mu_2= \dfrac{\beta+\rho}{\rho-\alpha}.$$ It follows that $\alpha+\beta \leq 0$. 
	For $\bar{x}:=1$, since
	$$\alpha \bar{x}+\beta =\alpha+\beta \leq 0,$$
	by~\eqref{fermat_rule_1} we can assert that $\bar{x}=1$ is a KKT point of~\eqref{trusted_reg_sub_1}. As $\alpha+\beta \leq 0$, we see that either  $\alpha = -\beta$, or $\alpha < -\beta$.
	
	If $\alpha = -\beta$, then $x^0=\mu_2=\dfrac{\beta+\rho}{\rho -\alpha}=1$. Since $x(t)\in C=[-1,1]$ for all $t\geq 0$, we have $x(t)\leq 1=\mu_2$ for all $t\geq 0$. Therefore, by the continuity of $x(\cdot)$, we can find some $\tau >0$ such that $\mu_1 < x(t)\leq \mu_2$
	for all $t\in[0,\tau]$. Combining this with~\eqref{dot_xt} and~\cite[Formula~(3.43), p.~53]{Teschl_2012}, one gets the expression~\eqref{ode_sol_2} for all $t \in [0, \tau]$. Since $\alpha = -\beta$ and $x^0=\mu_2$, from~\eqref{ode_sol_2} it follows that
	$$x(t)= \left(\dfrac{\beta+\rho}{\rho-\alpha}+\dfrac{\beta}{\alpha}\right)e^{\frac{-\alpha}{\eta\rho}t} -\dfrac{\beta}{\alpha} =1,$$
	for all $t\in[0,\tau]$. Then, arguing similarly as in the proof of Claim 1, we obtain $x(t) =1$ for all $t\geq 0$. So, $x(t)$ converges to the KKT point $\bar{x}=1$ as $t\to\infty$.
	
	Now, suppose that $\alpha < -\beta$. If there is $\bar{t}>0$ such that $x(\bar{t}) >\mu_2$ then, by reasoning in the same manner as in Case~3, we get
	$$x(t)= \left(x(\bar{t})-1\right)e^{-\frac{1}{\eta}(t-\bar{t})}+1$$
	for all $t\geq \bar{t}$. It follows that $\lim\limits_{t\to\infty}x(t)=1$, i.e., $x(t)$ converges to the KKT point $\bar{x}=1$. If such a value $\bar{t}>0$ does not exist, then  $x(t) \leq\mu_2$ for all $t>0$. The continuity of $x(\cdot)$ implies the existence of $\tau>0$ such that $\mu_1< x(t)\leq \mu_2$ for all $t\in[0,\tau]$.  Thus, by utilizing~\eqref{dot_xt} and~\cite[Formula~(3.43), p.53]{Teschl_2012}, one can show that $x(t)$ is of the form~\eqref{ode_sol_2} for all $t\in[0,\tau]$. In addition, recalling that $\alpha < -\beta$, we have
	\begin{equation*}\label{eq2}
		\mu_2+\dfrac{\beta}{\alpha} =	\dfrac{\beta+\rho}{\rho-\alpha} +\dfrac{\beta}{\alpha} =\dfrac{\rho(\alpha+\beta)}{\alpha(\rho-\alpha)}> 0.
	\end{equation*}  Therefore,
	$$x(t)=\left(x^0+\dfrac{\beta}{\alpha}\right)e^{\frac{-\alpha}{\eta\rho}t} -\dfrac{\beta}{\alpha}= \left(\mu_2+\dfrac{\beta}{\alpha}\right)e^{\frac{-\alpha}{\eta\rho}t} -\dfrac{\beta}{\alpha} > \left(\mu_2+\dfrac{\beta}{\alpha}\right)-\dfrac{\beta}{\alpha} = \mu_2,$$
	for all $t\in(0,\tau]$. This is impossible because $x(t)\leq \mu_2$ for all $t>0$. We have thus proved that $x(t)$ converges to the KKT point $\bar{x}=1$ as $t\to\infty$.
	
	{\bf Case 5:} $ \mu_1< x^0 < \mu_2$.  In this case, by the continuity of $x(\cdot)$ we can find a value $\tau >0$ such that $\mu_1 < x(t)< \mu_2$ for all $t\in[0,\tau]$. From~\eqref{dot_xt} and~\cite[Formula~(3.43), p.~53]{Teschl_2012}, it follows that the expression~\eqref{ode_sol_2} is valid for all $t\in[0,\tau]$. Since the behavior of the function $x(t)$ in~\eqref{ode_sol_2} when $t$ varies on the half-line $[0,\infty)$ depends on the sign of the coefficient $x^0+\dfrac{\beta}{\alpha}$ of the exponential term $e^{\frac{-\alpha}{\eta\rho}t}$, we consider the following subcases.
	
	$\quad$ {\bf Subcase 5a:} $x^0+\dfrac{\beta}{\alpha}=0$, i.e., $x^0 = -\dfrac{\beta}{\alpha}$. Setting $\bar{x}=-\dfrac{\beta}{\alpha}$, we have
	$\alpha\bar{x}+\beta =0$.
	So, by~\eqref{fermat_rule_1}, $\bar{x}= -\dfrac{\beta}{\alpha}$ is a KKT point of~\eqref{trusted_reg_sub_1}.  As $x^0\in [-1,1]$, we have $ -\dfrac{\beta}{\alpha}\in[-1,1]$. Since the formula~\eqref{ode_sol_2} is valid for any $t\in[0,\tau]$, we have $x(t)=-\dfrac{\beta}{\alpha}$ for every $t\in[0,\tau]$. Arguing similarly as in the proof of Claim 1, we can deduce that $x(t)=-\dfrac{\beta}{\alpha}$ for every $t\geq 0$. Thus,  $\lim\limits_{t\to\infty}x(t)=-\dfrac{\beta}{\alpha},$
	hence $x(t)$ tends to the KKT point $\bar{x}$  as $t\to\infty$.
	
	$\quad${\bf Subcase 5b:} $x^0 < -\dfrac{\beta}{\alpha}$. As $x^0\geq -1$ and $\alpha<0$, this inequality yields $\alpha < \beta$.  Then, $-\alpha +\beta >0$, and hence, by~\eqref{fermat_rule_1} we see that $\bar{x}:=-1$ is a KKT point of~\eqref{trusted_reg_sub_1}. The set $\Theta$ of all positive numbers $\theta$ satisfying 
	\begin{equation}\label{ine1}
		\mu_1 \leq \left(x^0+\dfrac{\beta}{\alpha}\right) e^{\frac{-\alpha}{\eta\rho}\theta} -\dfrac{\beta}{\alpha}\leq \mu_2
	\end{equation}
	is nonempty. Indeed, since $\mu_1 < x(t)< \mu_2$ for every $t\in[0,\tau]$ and $x(t)$ is of the form~\eqref{ode_sol_2}, taking any $\theta\in(0,\tau]$, we have~\eqref{ine1} because $$x(\theta) = \left(x^0+\dfrac{\beta}{\alpha}\right) e^{\frac{-\alpha}{\eta\rho}\theta} -\dfrac{\beta}{\alpha}.$$ For any $\theta\in\Theta$, from~\eqref{ine1} it follows that
	$$\dfrac{\beta-\rho}{\rho-\alpha} +\dfrac{\beta}{\alpha} \leq \left(x^0+\dfrac{\beta}{\alpha}\right) e^{\frac{-\alpha}{\eta\rho}\theta}\leq \dfrac{\beta+\rho}{\rho-\alpha} +\dfrac{\beta}{\alpha}.$$ 
	Since $x^0+\dfrac{\beta}{\alpha}<0$, multiplying the last expression by $\left(x^0+\dfrac{\beta}{\alpha}\right)^{-1}$ yields
	\begin{equation}\label{ine2}
		\dfrac{\rho(\beta-\alpha)}{(\rho-\alpha)(\alpha x^0+\beta)} \geq e^{\frac{-\alpha}{\eta\rho}\theta} \geq \dfrac{\rho(\beta+\alpha)}{(\rho-\alpha)(\alpha x^0+\beta)}.
	\end{equation}
	Thus, taking logarithm of both sides of the first inequality in~\eqref{ine2}, we get $\bar{\tau}\geq \theta$, where
	$$ \bar{\tau}:=-\dfrac{\eta\rho}{\alpha}\ln\left(\dfrac{\rho(\beta-\alpha)}{(\rho-\alpha)(\alpha x^0+\beta)}\right).$$
	For any $t > 0$, if the vector $x(t)=\left(x^0+\dfrac{\beta}{\alpha}\right) e^{\frac{-\alpha}{\eta\rho}t}-\dfrac{\beta}{\alpha}$ belongs to the segment $[\mu_1,\mu_2]$, then the value $\theta:=t$ satisfies the condition~\eqref{ine1}. This means that $t\in\Theta$. Hence, $t \leq \bar{\tau}$. Therefore, for each $t>\bar{\tau}$, either $x(t) <\mu_1$, or $x(t)>\mu_2$. Suppose for a while that there exist $t_1, t_2>\bar{\tau}$ such that $x(t_1) < \mu_1$ and $x(t_2) >\mu_2$. We may assume that $t_1<t_2$. By the intermediate value theorem (see e.g.~\cite[Theorem~4.23]{Rudin_1976}), for any $\lambda\in (\mu_1, \mu_2)$ there is some $\xi \in (t_1,t_2)$ such that $x(\xi) = \lambda$. We have arrived at a contradiction. Thus, either $x(t) <\mu_1$ for all $t>\bar{\tau}$ or $x(t) >\mu_2$ for all $t>\bar{\tau}$. If the first situation occurs, then take any $\bar{t}>\bar{\tau}$ and find $\tau_0 >\bar{\tau}$ such that $x(t) < \mu_1$ for all $t\in [\bar{t},\tau_0]$. Repeating the arguments used in Case~1, we can show that $x(t)$ converges to the KKT point $\bar{x}=-1$ of~\eqref{trusted_reg_sub_1} as $t\to\infty$. If the second situation occurs, then the inequality $\mu_2<x(t)$ holds for every $t>\bar{\tau}$. As $x(t) \leq 1$ for all $t\geq 0$, we have
	$$\dfrac{\beta+\rho}{\rho-\alpha}=\mu_2<1.$$
	Thus, $\beta <-\alpha$  or, equivalently, $\alpha+\beta<0$. By~\eqref{fermat_rule_1}, this implies that $\tilde{x}:=1$ is a KKT of~\eqref{trusted_reg_sub_1}. Arguing similarly as in Case~3 gives $\lim\limits_{t\to\infty} x(t)=\tilde{x}$.
	
	$\quad${\bf Subcase 5c:} $x^0 > -\dfrac{\beta}{\alpha}$. As $x^0<1$ and $\alpha<0$,  this inequality forces $\alpha<-\beta$. Hence, $\alpha+\beta <0$ and, in accordance with~\eqref{fermat_rule_1},  $\tilde{x}:=1$ is a KKT point of~\eqref{trusted_reg_sub_1}. Similarly as in Subcase~5b, let $\Theta$ be the set of $\theta>0$ satisfying~\eqref{ine1}. Then, one has
	$$\dfrac{\rho(\beta-\alpha)}{(\rho-\alpha)(\alpha x^0+\beta)} \leq e^{\frac{-\alpha}{\eta\rho}\theta} \leq \dfrac{\rho(\beta+\alpha)}{(\rho-\alpha)(\alpha x^0+\beta)}.$$
	It follows from the second inequality of the last expression that
	$$\theta\leq -\dfrac{\eta\rho}{\alpha}\ln\left(\dfrac{\rho(\beta+\alpha)}{(\rho-\alpha)(\alpha x^0+\beta)}\right)=:\hat{\tau}.$$
	For any $t > 0$, if the vector $x(t)=\left(x^0+\dfrac{\beta}{\alpha}\right) e^{\frac{-\alpha}{\eta\rho}t}-\dfrac{\beta}{\alpha}$ is contained in the segment $[\mu_1,\mu_2]$, then the value $\theta:=t$ satisfies the condition~\eqref{ine1}. This implies that $t\in\Theta$. As a consequence, we have $t \leq \hat{\tau}$.  Arguing analogously as in Subcase~5b, we find that either $x(t) <\mu_1$ for all $t>\hat{\tau}$ or $x(t) >\mu_2$ for all $t>\hat{\tau}$. If $x(t) <\mu_1$ for all $t>\hat{\tau}$, it follows from the fact $x(t)\geq -1$ for all $t>\hat{\tau}$ that $$\dfrac{\beta-\rho}{\rho-\alpha}=\mu_1 >-1.$$ Hence, $\beta >\alpha$, or equivalently, $-\alpha+\beta >0$. By~\eqref{fermat_rule_1}, this implies that $\bar{x}:=-1$ is a KKT point of~\eqref{trusted_reg_sub_1}. Therefore, arguing similarly as in Case~1, we can infer that $x(t)$ tends to $\bar{x}=-1$ as $t\to\infty$. If $x(t) > \mu_2$ for all $t>\hat{\tau}$, then following the reasoning in Case~3, we see that $x(t)$ converges to the KKT point $\tilde{x}=1$ as $t\to\infty$. 
	
	By combining the results from Cases 1--5, we arrive at the desired conclusion.
    $\hfill\Box$
\end{proof}

Another partial answer to Question~7 is provided by the next proposition, which deals with convex quadratic programs and the corresponding monotone affine variational inequalities.

\begin{proposition}\label{prop2:dim1}
	For any $(\alpha,\beta)\in\R\times\R$ satisfying $\alpha\geq0$ and for any $x^0\in [-1,1]$, the unique solution $x(t)$ of~\eqref{dynamic_sys_A1}  converges to a KKT point of~\eqref{trusted_reg_sub_1} as $t\to\infty$, provided that $\rho-\alpha\geq 0$. 
\end{proposition}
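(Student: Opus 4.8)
The plan is to rewrite \eqref{dynamic_sys_A1} in fixed-point form and to exploit the fact that the hypothesis $\rho-\alpha\ge 0$ makes the associated projection map nonexpansive, and a genuine contraction as soon as $\alpha>0$. For $x\in\R$ set $g(x):=\big(1-\tfrac{\alpha}{\rho}\big)x-\tfrac{\beta}{\rho}$ and $T(x):=P_C\big(g(x)\big)$, so that the system reads $\dot x(t)=\tfrac1\eta\big(T(x(t))-x(t)\big)$. First I would record that the fixed points of $T$ are exactly the KKT points of \eqref{trusted_reg_sub_1}: the equality $\bar x=P_C\big(\bar x-\tfrac1\rho(\alpha\bar x+\beta)\big)$ is equivalent, by the variational characterization of the metric projection, to $\bar x\in C$ together with $(\alpha\bar x+\beta)(y-\bar x)\ge 0$ for all $y\in C$, i.e.\ to \eqref{fermat_rule_1}. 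Since $0\le\alpha\le\rho$, the affine map $g$ has slope $1-\tfrac{\alpha}{\rho}\in[0,1]$, hence is nonexpansive; as $P_C$ is nonexpansive \cite[Chapter~I, Corollary~2.4]{ks80}, the composition $T$ satisfies $|T(x)-T(y)|\le q\,|x-y|$ with $q:=1-\tfrac{\alpha}{\rho}\in[0,1]$.

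In the main case $\alpha>0$ one has $q\in[0,1)$, so $T$ is a contraction and admits a unique fixed point $\bar x$, which is therefore the unique KKT point of \eqref{trusted_reg_sub_1} and lies in $C$. To prove $x(t)\to\bar x$ I would use the Lyapunov function $V(t):=\tfrac12\big(x(t)-\bar x\big)^2$. Since $x(\cdot)$ is $C^1$ and $T(\bar x)=\bar x$, writing $T(x)-x=\big(T(x)-T(\bar x)\big)-(x-\bar x)$ and using the contraction bound gives $\dot V=\tfrac1\eta(x-\bar x)\big(T(x)-x\big)\le\tfrac1\eta(q-1)(x-\bar x)^2=-\tfrac{\alpha}{\eta\rho}(x-\bar x)^2=-\tfrac{2\alpha}{\eta\rho}\,V$. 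The differential inequality then yields $|x(t)-\bar x|\le|x^0-\bar x|\,e^{-\frac{\alpha}{\eta\rho}t}\to 0$, i.e.\ exponential convergence to the KKT point. Here $\rho-\alpha\ge 0$ is used precisely to ensure $q\ge 0$, so that the absolute-value estimate on $(x-\bar x)\big(T(x)-T(\bar x)\big)$ does not reverse sign; this is exactly the role of the standing hypothesis. Note that the borderline $\rho=\alpha>0$ (where $q=0$) is automatically covered.

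It remains to handle the degenerate case $\alpha=0$, where $\rho-\alpha=\rho>0$ holds automatically but $F\equiv\beta$ and $q=1$, so $T$ is only nonexpansive and the Lyapunov estimate degenerates to $\dot V\le 0$; this is the main obstacle, since convergence no longer follows from the contraction argument. If $\beta=0$, then $T$ restricted to $C$ is the identity, $\dot x\equiv 0$, and $x(t)\equiv x^0$ is a KKT point. If $\beta>0$, the piecewise description \eqref{dot_xt} reduces, inside $C$, to $\dot x=-\tfrac{\beta}{\eta\rho}<0$ on the band $[\mu_1,\mu_2]$ and $\dot x=\tfrac1\eta(-1-x)$ below it, with $\mu_1>-1$ and $\mu_2>1$; hence $x(\cdot)$ is strictly decreasing while it stays in $(-1,1)$ and, being bounded below by $-1$ (flow invariance, Theorem~\ref{flow_invariant_A}), it converges monotonically to $-1$, which the endpoint argument of Case~1 of Proposition~\ref{prop1:dim1} identifies as the unique KKT point. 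The case $\beta<0$ is symmetric and gives convergence to $+1$. Combining the contraction case $\alpha>0$ with this elementary treatment of $\alpha=0$ yields the conclusion for all $(\alpha,\beta)$ with $\alpha\ge 0$ and $\rho-\alpha\ge 0$.
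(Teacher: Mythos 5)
Your proof is correct, but it follows a genuinely different route from the paper's. The paper works directly with the piecewise-linear form \eqref{dot_xt_n} of the dynamics, distinguishes three cases according to the position of $(\rho-\alpha)x(t)$ relative to the thresholds $\beta-\rho$ and $\beta+\rho$, solves the resulting linear ODEs explicitly via \cite[Formula~(3.43), p.~53]{Teschl_2012}, patches the pieces together by uniqueness of the trajectory, and verifies separately in each case that the limit ($-1$, $+1$, or $-\beta/\alpha$) satisfies \eqref{fermat_rule_1}. You instead write the right-hand side as $\frac{1}{\eta}\bigl(T(x)-x\bigr)$ with $T=P_C\circ g$, identify the fixed points of $T$ with the KKT points of \eqref{trusted_reg_sub_1} via the variational characterization of the projection, and, for $\alpha>0$, exploit that $T$ is a contraction with constant $q=1-\alpha/\rho<1$ together with the Lyapunov estimate $\dot V\le-\frac{2\alpha}{\eta\rho}V$. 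This buys two things the paper's argument does not give: an explicit exponential rate $|x(t)-\bar x|\le|x^0-\bar x|e^{-\alpha t/(\eta\rho)}$, and a proof that connects the result to the standard theory of projected dynamics for strongly monotone operators (in the spirit of \cite[Theorem~5.2]{CPP_02}, cited in the introduction). It also reveals that your hypothesis is not sharp: the contraction constant is really $\bigl|1-\alpha/\rho\bigr|$, so your argument for $\alpha>0$ goes through whenever $\alpha<2\rho$, a strictly weaker condition than $\rho-\alpha\ge0$; this is relevant to the paper's Question~3 and shows that the condition $\rho-\alpha\ge 0$ is an artifact of the paper's proof technique (compare the remark following the proposition, whose counterexample to the paper's Claims~1--4 sits exactly at $\alpha=2\rho$) rather than intrinsic to the convergence statement. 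Two small points in your $\alpha=0$, $\beta>0$ case deserve a line each: if $\beta\ge 2\rho$ then $\mu_1\ge 1$ and the band $[\mu_1,\mu_2]$ does not meet $C$, in which case the dynamics on $C$ is simply $\dot x=\frac{1}{\eta}(-1-x)$ and the same monotone argument applies; and the assertion that the monotone limit equals $-1$ (rather than some interior point) needs the one-line observation that at any limit $L>-1$ the vector field is bounded away from zero, contradicting convergence. Neither affects the validity of the proof.
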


\begin{proof}
	Let $x^0\in [-1,1]$ be given arbitrarily and let $x(t)$ be the unique solution of~\eqref{dynamic_sys_A1}. According to Theorems~\ref{global_sol_A} and~\ref{flow_invariant_A}, the trajectory $x(t)$ is defined on the whole real line and $x(t)\in C$ for all $t\geq 0$.
	
	By the assumptions of the proposition, $\alpha\geq0$. Hence, either $\alpha=0$ or $\alpha>0$. If $(\alpha,\beta)=(0,0)$, then one has $C_*=[-1,1]$, where $C_*$ denotes the KKT point of~\eqref{trusted_reg_sub_1}. For every $t\in [0,\infty)$, since $x(t)-\dfrac{1}{\rho}\big(\alpha x(t)+\beta\big)=x(t)$ and $x(t)\in C$, the differential equation in~\eqref{dynamic_sys_A1} becomes $\dot{x}(t)=0$ for all $t>0$. Thus, by~\eqref{dynamic_sys_A1} we have $x(t)=x^0$ for all $t\geq 0$. As $x^0\in C_*$, the assertion saying that $x(t)$ converges to a KKT point of~\eqref{trusted_reg_sub_1} as $t\to\infty$ is valid. To prove this fact in the general case, we cannot apply many arguments of the above proof, because the number $(\rho-\alpha)$ can be negative or null, as well. However, a modified version of that proof scheme can be used.
	
	For every $x\in \R$, put $u(x)=x-\dfrac{1}{\rho}(\alpha x+\beta)$ and observe that
	\begin{itemize}
		\item[\rm (i)] $u(x)<-1$ if and only if $(\rho-\alpha)x<\beta-\rho$;
		\item[\rm (ii)] $u(x)>1$ if and only if $(\rho-\alpha)x>\beta+\rho$;
		\item[\rm (iii)]  $-1\leq u(x) \leq 1$  if and only if $\beta-\rho\leq (\rho-\alpha)x\leq\beta+\rho$.
	\end{itemize}
	Therefore, thanks to~\eqref{proj_u} we can equivalently rewrite the differential relation in~\eqref{dynamic_sys_A1} as follows
	\begin{equation}\label{dot_xt_n}
		\dot{x}(t)=\begin{cases}
			\dfrac{1}{\eta}\big(-1-x(t)\big)\qquad &\mbox{\rm if}\;\; (\rho-\alpha)x(t)<\beta-\rho,\\[2ex]
			\dfrac{1}{\eta}\big(1-x(t)\big) &\mbox{\rm if}\;\; (\rho-\alpha)x(t)>\beta+\rho,\\[2ex]
			-\dfrac{1}{\eta\rho}\big(\alpha x(t) +\beta\big)&\mbox{\rm if}\;\;\beta-\rho\leq (\rho-\alpha)x(t)\leq\beta+\rho.
		\end{cases}
	\end{equation}

	\smallskip
By our assumption, $\rho-\alpha\geq 0$. The assertion will be proved by analyzing the following~3 cases. Observe that, in some sense, the second case is symmetric to the first one. In addition, note that if none of the situations in the first two cases occurs, then the third case must happen.
	
	\smallskip
     $\quad$ {\bf Case 1:}  \textit{There exists $\bar t\geq 0$ such that \begin{equation}\label{ine_c1} (\rho-\alpha)x(\bar t)<\beta-\rho.\end{equation}}
     
   	 \smallskip 	{\sc Claim 1:} \textit{The unique solution $x(t)$ of~\eqref{dynamic_sys_A1} converges to $\bar x:=-1$.}
	
	\smallskip 
	Indeed, by~\eqref{ine_c1} and the continuity of $x(\cdot)$ we can find some $\tau >0$ such that 
	\begin{equation}\label{tau} (\rho-\alpha)x(t)<\beta-\rho\quad\mbox{for all} \;\;t\in [\bar t,\bar t+\tau].\end{equation}
	Hence, from~\eqref{dot_xt_n} we get the following initial value problem for a first-order linear ordinary differential equation with constant coefficients
	\begin{equation*}\label{dynamic_sys_A1_c1_n}
		\begin{cases}
			\dot{x}(t)= -\dfrac{1}{\eta}x(t)-\dfrac{1}{\eta}, \quad\mbox{\rm for all}\;\; t\in [\bar t,\bar t+\tau],\\
			x(\bar t)=\theta
		\end{cases}
	\end{equation*} with $\theta:=x(\bar t)$. So, from the result in~\cite[Formula~(3.43), p.~53]{Teschl_2012} it follows that
	\begin{equation}\label{ode_sol_1_n0}
		x(t)=\left(\theta+1\right)e^{-\frac{1}{\eta}(t-\bar t)}-1
	\end{equation}
	for all $t\in [\bar t,\bar t+\tau]$. As $\theta+1\geq 0$, applied for all $t\in [\bar t,\infty)$, the formula 
	\begin{equation}\label{ode_sol_1_n}
		y(t)=\left(\theta+1\right)e^{-\frac{1}{\eta}(t-\bar t)}-1
	\end{equation} defines a non-increasing function $y:[\bar t,\infty)\to\mathbb R$. From~\eqref{ode_sol_1_n} it follows that
	\begin{equation}\label{dynamic_sys_A1_c1_nb}
		\begin{cases}
			\dot{y}(t)= -\dfrac{1}{\eta}y(t)-\dfrac{1}{\eta}, \quad\mbox{\rm for all}\;\; t\in [\bar t,\infty),\\
			y(\bar t)=\theta.
		\end{cases}
	\end{equation}
	Moreover,
	$$(\rho-\alpha) y(t)\leq (\rho-\alpha) y(\bar t)=(\rho-\alpha) x(\bar t)< \beta-\rho\quad\mbox{for all} \;\;t\in [\bar t,\infty).$$ Hence, for every $t\in [\bar t,\infty)$, one has $u(y(t))<-1$. It follows that 
	$$\dfrac{1}{\eta}\left[P_C(u(y(t)))-y(t)\right]= -\dfrac{1}{\eta}y(t)-\dfrac{1}{\eta}.$$ Combining this with~\eqref{dynamic_sys_A1_c1_nb}, we can assert that the condition
	$$\dot{y}(t)= \dfrac{1}{\eta} \left[P_C\left(y(t)-\dfrac{1}{\rho}\big(\alpha y(t)+\beta\big)\right)-y(t)\right]$$ is satisfied for all $t\in [\bar t,\infty)$. Therefore, recalling that $y(\bar t)=\theta=x(\bar t)$ and the trajectory $x(t)$ under our consideration is unique, we get $x(t)=y(t)$  for all $t\in [\bar t,\infty)$. Hence, from~\eqref{dynamic_sys_A1_c1_nb} we obtain $\lim\limits_{t\to\infty} x(t)=-1$.
	
	 \smallskip
	{\sc Claim 2:} \textit{Condition~\eqref{ine_c1} implies that $\bar{x}=-1$  is a KKT point of~\eqref{trusted_reg_sub_1}.}
	
	\smallskip 
	Indeed, from~\eqref{ine_c1} we can deduce that $(\rho-\alpha)(x(\bar t)+1)<\beta-\alpha$. Since $\rho-\alpha\geq 0$ and $x(\bar t)+1\geq 0$, this yields $\beta-\alpha>0$. So, $\bar{x}=-1$ satisfies the condition~\eqref{fermat_rule_1}, i.e., $\bar{x}$ is a KKT point of~\eqref{trusted_reg_sub_1}. 
	
	\smallskip
	From Claims~1 and~2 it follows that the unique solution $x(t)$ of~\eqref{dynamic_sys_A1}  converges to a KKT point of~\eqref{trusted_reg_sub_1} as $t\to\infty$.
	
\medskip
{\bf Case 2:} \textit{There exists $\bar t\geq 0$ such that \begin{equation}\label{ine_c2} (\rho-\alpha)x(\bar t)>\beta+\rho.\end{equation}}

\smallskip 	{\sc Claim 3:} \textit{The unique solution $x(t)$ of~\eqref{dynamic_sys_A1} converges to $\bar x:=1$.}

\smallskip 
Indeed, using~\eqref{ine_c2} and the continuity of $x(\cdot)$ we can find some $\tau >0$ such that 
$$(\rho-\alpha)x(t)>\beta-\rho\quad\mbox{for all} \;\;t\in [\bar t,\bar t+\tau].$$
Hence, from~\eqref{dot_xt_n} we get the following initial value problem for a differential equation 
\begin{equation*}\label{dynamic_sys_A1_c1_n2}
	\begin{cases}
		\dot{x}(t)= -\dfrac{1}{\eta}x(t)+\dfrac{1}{\eta}, \quad\mbox{\rm for all}\;\; t\in [\bar t,\bar t+\tau],\\
		x(\bar t)=\theta
	\end{cases}
\end{equation*} with $\theta:=x(\bar t)$. Applying~\cite[Formula~(3.43), p.~53]{Teschl_2012} yields
\begin{equation}\label{x(t)_Case2}
	x(t)=\left(\theta-1\right)e^{-\frac{1}{\eta}(t-\bar t)}+1
\end{equation}
for all $t\in [\bar t,\bar t+\tau]$. Since $\theta-1\leq 0$, the function $y:[\bar t,\infty)\to\mathbb R$ defined by the formula 
\begin{equation}\label{ode_sol_1_n2}
	y(t)=\left(\theta-1\right)e^{-\frac{1}{\eta}(t-\bar t)}+1
\end{equation} for $t\in [\bar t,\infty)$ is non-decreasing . Clearly,~\eqref{ode_sol_1_n2} yields
\begin{equation}\label{dynamic_sys_A1_c1_nb2}
	\begin{cases}
		\dot{y}(t)= -\dfrac{1}{\eta}y(t)+\dfrac{1}{\eta}, \quad\mbox{\rm for all}\;\; t\in [\bar t,\infty),\\
		y(\bar t)=\theta.
	\end{cases}
\end{equation}
Moreover, by~\eqref{ine_c2} we have
$$(\rho-\alpha) y(t)\geq (\rho-\alpha) y(\bar t)=(\rho-\alpha) x(\bar t)> \beta-\rho\quad\mbox{for all} \;\;t\in [\bar t,\infty).$$ Then, the inequality $u(y(t))>1$ holds for every $t\in [\bar t,\infty)$. Therefore, 
$$\dfrac{1}{\eta}\left[P_C(u(y(t)))-y(t)\right]= -\dfrac{1}{\eta}y(t)+\dfrac{1}{\eta}.$$ So,~\eqref{dynamic_sys_A1_c1_nb2} assures that the condition
$$\dot{y}(t)= \dfrac{1}{\eta} \left[P_C\left(y(t)-\dfrac{1}{\rho}\big(\alpha y(t)+\beta\big)\right)-y(t)\right]$$ is fulfilled for all $t\in [\bar t,\infty)$. Hence, the uniqueness of the trajectory $x(t)$ and the condition $y(\bar t)=\theta=x(\bar t)$ imply that $x(t)=y(t)$  for all $t\in [\bar t,\infty)$. Hence, from~\eqref{ode_sol_1_n2} we can deduce that $\lim\limits_{t\to\infty} x(t)=\bar x$.

\smallskip
{\sc Claim 4:} \textit{Condition~\eqref{ine_c2} implies that $\bar{x}:=1$  is a KKT point of~\eqref{trusted_reg_sub_1}.}

\smallskip 
Indeed, from~\eqref{ine_c2} we can deduce that $(\rho-\alpha)(x(\bar t)-1)>\beta+\alpha$. Since $\rho-\alpha\geq 0$ and $x(\bar t)-1\leq 0$, this yields $\beta+\alpha<0$. So, the condition~\eqref{fermat_rule_1} is fulfilled for $\bar{x}=1$; hence, $\bar{x}\in C_*$.  

\smallskip
From Claims~3 and~4 we get the desired conclusion on the asymptotic behavior of the trajectory $x(t)$ of~\eqref{dynamic_sys_A1}.  

\smallskip
{\bf Case 3:} \textit{For any $t\geq 0$, one has \begin{equation}\label{ine_c3} \beta-\rho\leq (\rho-\alpha)x(t)\leq\beta+\rho.\end{equation}}

\smallskip 
Thanks to~\eqref{dot_xt_n}, we can use condition~\eqref{ine_c3} to equivalently rewrite the system~\ref{dynamic_sys_A1} as
\begin{equation}\label{dynamic_sys_A1_n}
	\begin{cases}
		\dot{x}(t)= -\dfrac{1}{\eta\rho}\big(\alpha x(t) +\beta\big), \quad\mbox{\rm for all}\;\; t>0,\\
		x(0)= x^0.
	\end{cases}
\end{equation}

If $\alpha=0$, then~\eqref{dynamic_sys_A1_n} implies that \begin{equation}\label{special_traj} x(t)=x^0-\dfrac{\beta}{\eta\rho}t, \quad\mbox{\rm for all}\;\; t\geq 0.\end{equation} Since the situation $(\alpha,\beta)=(0,0)$ has been considered at the beginning of this proof, we can assume that $\beta\neq 0$. In this case, the trajectory $x(t)$ given by~\eqref{special_traj} is unbounded. This comes in conflict with~\eqref{ine_c3}, which now reads $$\rho^{-1}(\beta-\rho)\leq x(t)\leq \rho^{-1}(\beta+\rho)$$ for all $t\geq 0$. Thus, under assumption made in this subcase, if $\alpha=0$, then one must have $\beta=0$, and the desired asymptotic behavior of the trajectory $x(t)$ is guaranteed.

If $\alpha\neq 0$, then by~\cite[Formula~(3.43), p.~53]{Teschl_2012} we get
\begin{equation}\label{ode_sol_2_n}
	x(t)=	\left(x^0+\dfrac{\beta}{\alpha}\right)e^{\frac{-\alpha}{\eta\rho}t} -\dfrac{\beta}{\alpha}, \quad\mbox{\rm for all}\;\; t\geq 0.
\end{equation}

\smallskip
{\sc Claim 5:} \textit{If $\alpha\neq 0$, then the point $\bar x:=-\dfrac{\beta}{\alpha}$ belongs to $C=[-1,1]$.}

Indeed, if $\rho=\alpha$, then~\eqref{ine_c3} yields $\beta-\alpha\leq 0\leq\beta+\alpha$. Hence, $-1\leq -\dfrac{\beta}{\alpha}\leq 1$. Now, suppose that $\rho\neq \alpha$. Since $\rho-\alpha\geq 0$, we must have $\rho-\alpha>0$. Then, combining~\eqref{ine_c3} with~\eqref{ode_sol_2_n} gives
\begin{equation}\label{ine_c3_new} \dfrac{\beta-\rho}{\rho-\alpha}\leq \left(x^0+\dfrac{\beta}{\alpha}\right)e^{\frac{-\alpha}{\eta\rho}t} -\dfrac{\beta}{\alpha}\leq \dfrac{\beta+\rho}{\rho-\alpha}\end{equation} for all $t\geq 0$. Passing the relations~\eqref{ine_c3_new} to limits as $t\to\infty$, we get
\begin{equation*}\label{ine_c3_n} \dfrac{\beta-\rho}{\rho-\alpha}\leq -\dfrac{\beta}{\alpha}\leq \dfrac{\beta+\rho}{\rho-\alpha},\end{equation*} which implies that $-1\leq -\dfrac{\beta}{\alpha}\leq 1$. Thus, $\bar x\in C$.

Since $f(x)=\dfrac{1}{2} \alpha x^2 +\beta x$, the number $\bar x=-\dfrac{\beta}{\alpha}$ is a solution of the equation $\nabla f(x)=0$. As $\bar x\in C$, this implies that $\bar x$ is a KKT point of~\eqref{trusted_reg_sub_1}. Finally, from~\eqref{ode_sol_2_n} we can deduce that $\lim\limits_{t\to\infty} x(t)=\bar x$.

The proof is complete. $\hfill\Box$
\end{proof}

\begin{remark} The condition $\rho-\alpha\geq 0$ is not essential for the analysis of Case~3 in the above proof, hence it can be dropped in Case~3. Indeed, if $\rho-\alpha <0$, then from~\eqref{ine_c3} and~\eqref{ode_sol_2_n} we have
	\begin{equation}\label{ine_c3_new1} \dfrac{\beta-\rho}{\rho-\alpha}\geq \left(x^0+\dfrac{\beta}{\alpha}\right)e^{\frac{-\alpha}{\eta\rho}t} -\dfrac{\beta}{\alpha}\geq \dfrac{\beta+\rho}{\rho-\alpha}\end{equation} for all $t\geq 0$. Letting $t\to\infty$, from~\eqref{ine_c3_new1} we obtain
	\begin{equation*}\dfrac{\beta-\rho}{\rho-\alpha}\geq -\dfrac{\beta}{\alpha}\geq \dfrac{\beta+\rho}{\rho-\alpha},\end{equation*} which implies that $-1\leq -\dfrac{\beta}{\alpha}\leq 1$.
\end{remark}

\begin{remark} The condition $\rho-\alpha\geq 0$ is essential for the validity of Claims 1--4 in the proof of Proposition~\eqref{prop2:dim1} or for some related proof arguments. To justify this remark, consider the quadratic optimization problem~\eqref{trusted_reg_sub_1} with $\alpha=2$ and $\beta=1$. Then, we have $f(x)=x^2 +x$. So,~\eqref{trusted_reg_sub_1} is a strongly convex quadratic program having the unique solution $\bar x=-\frac{1}{2}$. Hence, $C_*=\{-\frac{1}{2}\}$. Choose $\eta=\rho=1$, $x^0=1$, and let $x(t)$ be the unique solution of~\eqref{dynamic_sys_A1}. Here we have $\rho-\alpha=-1$. For $\bar t:=0$, it holds that $$(\rho-\alpha)x(\bar t)=-1<0=\beta-\rho.$$ Hence, condition~\eqref{ine_c1} is satisfied. Formula~\eqref{ode_sol_1_n0} now becomes the expression $
x(t)=2e^{-t}-1$ for all $t\in [0,\tau]$, provided that $\tau>0$ be such that the requirement~\eqref{tau} is fulfilled. Since the latter now reads
\begin{equation*}\label{tau_e} x(t)>0\quad\mbox{for all} \;\;t\in 0,\tau] \end{equation*} and since the function $
x(t)=2e^{-t}-1$ is decreasing on $[0,\infty)$, we must have $\tau<-\ln (\frac{1}{2})$. Hence, the expression $
x(t)=2e^{-t}-1$ does not hold for all $t\in [0,\infty)$. Therefore, unlike the final part of the proof of Claim 1, the action of taking limit of this expression of $x(t)$ as $t\to\infty$ cannot be used here. Since $C_*=\{-\frac{1}{2}\}$, Claim~2 is invalid, despite the fact that condition~\eqref{ine_c1} is satisfied. Similarly, choosing $x^0=-1$ and $\bar t=0$, we can show that  the action of taking limit of the expression~\eqref{x(t)_Case2} of $x(t)$ as $t\to\infty$ cannot be used to justify Claim~3. Again, since $C_*=\{-\frac{1}{2}\}$, Claim~4 is invalid, although condition~\eqref{ine_c2} is fulfilled.
\end{remark}

Combining the results in Proposition~\ref{prop1:dim1} and Proposition~\ref{prop2:dim1}, we obtain the following theorem, which answers Question~7 in the affirmative under an additional assumption on the parameter $\rho$ when $\alpha>0$.

\begin{theorem}\label{Thm4.1}
	For any $(\alpha,\beta)\in\R\times\R$ and for any $x^0\in [-1,1]$, the unique solution $x(t)$ of~\eqref{dynamic_sys_A1}  converges to a KKT point of~\eqref{trusted_reg_sub_1} as $t\to\infty$ if either $\alpha\leq 0$ or $\alpha>0$ and $\rho\geq\alpha$. 
\end{theorem}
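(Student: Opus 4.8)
The plan is to deduce Theorem~\ref{Thm4.1} directly from the two partial results already at hand, namely Proposition~\ref{prop1:dim1} and Proposition~\ref{prop2:dim1}, by splitting the hypothesis ``$\alpha\leq 0$, or $\alpha>0$ and $\rho\geq\alpha$'' into cases according to the sign of $\alpha$. Since $\rho>0$ is fixed throughout, the key preliminary observation is that in every parameter configuration admitted by the hypothesis one has $\rho-\alpha\geq 0$; this guarantees that the standing assumption of Proposition~\ref{prop2:dim1} is available whenever $\alpha\geq 0$.

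First I would dispose of the case $\alpha<0$. Here no restriction on $\rho$ is required: Proposition~\ref{prop1:dim1} asserts precisely that for every $x^0\in[-1,1]$ the unique solution $x(t)$ of~\eqref{dynamic_sys_A1} converges to a KKT point of~\eqref{trusted_reg_sub_1} as $t\to\infty$, which is exactly the desired conclusion.

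Next I would treat the case $\alpha\geq 0$. The hypothesis of the theorem, restricted to $\alpha\geq 0$, forces $\rho\geq\alpha$: if $\alpha=0$ this holds because $\rho>0$, while if $\alpha>0$ it is the assumed condition itself. Hence $\rho-\alpha\geq 0$, so Proposition~\ref{prop2:dim1} applies verbatim and yields, for every $x^0\in[-1,1]$, convergence of $x(t)$ to a KKT point of~\eqref{trusted_reg_sub_1} as $t\to\infty$.

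Since the two cases $\alpha<0$ and $\alpha\geq 0$ exhaust all parameter pairs permitted by the hypothesis, and each is covered by one of the two propositions with its assumptions verified, the conclusion follows. There is no genuine analytic obstacle at this stage; the substantive work has already been carried out in the proofs of Propositions~\ref{prop1:dim1} and~\ref{prop2:dim1}. The only point demanding care is the boundary value $\alpha=0$, which must be absorbed into Proposition~\ref{prop2:dim1} rather than Proposition~\ref{prop1:dim1}; this is legitimate precisely because $\rho>0$ ensures $\rho-\alpha>0$ there.
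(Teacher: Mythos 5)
Your proposal is correct and matches the paper's own argument: the paper obtains Theorem~\ref{Thm4.1} precisely by combining Proposition~\ref{prop1:dim1} (for $\alpha<0$, no condition on $\rho$) with Proposition~\ref{prop2:dim1} (for $\alpha\geq 0$ under $\rho-\alpha\geq 0$). Your extra care in routing the boundary case $\alpha=0$ through Proposition~\ref{prop2:dim1}, where $\rho>0$ automatically gives $\rho-\alpha>0$, is exactly the right way to reconcile the theorem's hypothesis ``$\alpha\leq 0$'' with the strict inequality ``$\alpha<0$'' in Proposition~\ref{prop1:dim1}.
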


\begin{remark} Specializing the result in Theorem~\ref{Thm4.1} for the case $\eta=1$, we see that for any $(\alpha,\beta)\in\R\times\R$ and for any $x^0\in [-1,1]$, the unique solution $x(t)$ of the dynamical system
	\begin{equation*}
		\begin{cases}
			\dot{x}(t)= P_C\left(x(t)-\dfrac{1}{\rho}\big(\alpha x(t)+\beta\big)\right)-x(t), \quad\mbox{\rm for all}\;\; t>0,\\
			x(0)= x^0.
		\end{cases}
	\end{equation*}
  converges to a KKT point of~\eqref{trusted_reg_sub_1} as $t\to\infty$  if either $\alpha\leq 0$ or $\alpha>0$ and $\rho\geq\alpha$. This fact follows from~\cite[Theorem~1]{Antipin_94}, where no condition on the positive parameter $\rho$ was needed. 
\end{remark} 

\vskip 6mm
\noindent{\bf Acknowledgments}

\noindent A major part of the results of this paper was obtained at Dipartimento di Informatica, Universit\`a di Pisa, Pisa, Italy, in September 2024. N.~N.~Thieu and N.~D.~Yen would like to thank Professor M.~Pappalardo for his invitation, effective research cooperation, and very warm hospitality.

\end{document}